\newtheorem{thm}{Theorem}[section]
\newtheorem{cor}[thm]{Corollary}
\newtheorem{lem}[thm]{Lemma}
\theoremstyle{definition}
\numberwithin{equation}{section}
\def\sfrac#1#2{#1/#2}
\begin{document}


\baselineskip=17pt



\title[Roots of Markoff quadratic forms]{Roots of Markoff quadratic forms as strongly badly approximable numbers}

\author[J. Florek]{Jan~Florek}
\address{Institute of Mathematics, \\
University of Economics,
ul. Komandorska 118/120\\
  53--345 Wroc{\l}aw, Poland}
\email{jan.florek@ue.wroc.pl}
\date{}


\begin{abstract}
For a real number $x$, $\| x\| = \min \{|x-p|: p\in Z\}$ is the distance of  $x$ to the nearest integer. We say that two real numbers $\theta$, $\theta'$ are $\pm$~equivalent 
if their sum or difference is an integer. Let $\theta$ be irrational and put
\[
\varphi(\theta) = \inf \{ q \,\| q \theta\| : q \in N \}.
\]
We will prove: If  $\varphi(\theta)> 1/3$, then $\theta$
is $\pm$~equivalent to a root of $f_m (x,1) = 0$, where $f_m$ is a Markoff form. Conversely, if $\theta$ is $\pm$~equivalent to a root of
$f_m(x,1)=0$, then
\[
\varphi(\theta) = m \| m\theta \| = \frac{2}{3+\sqrt{9-4m^{-2}}} > 1/3.
\]
\end{abstract}

\subjclass[2010]{11J06, 11J04, 11D09,  11H55,  11J70}

\keywords{Markoff quad\-rat\-ic forms,  badly approximable numbers, Mar\-koff numbers
}

\maketitle


\section{Introduction}

By a \textsl{Markoff triple} we mean a solution $(m,m_1,m_2)$ of the following diophantine equation
\begin{equation}\label{fleq1.1}
m^2 + m^2_1 +m^2_2 = 3m m_1 m_2, \quad (m,m_1,m_2 \in Z, \ m \geq m_1 \geq m_2 \geq 1).
\end{equation}
A \textsl{Markoff number} is a member of such triple. 
The three integers in any Markoff triple are relatively coprime in pairs (see Cassels \cite{flref4} or Cusick--Flahive \cite{flref5}). A \textsl{Markoff form} associated to a~Markoff triple $(m,m_1,m_2)$ is defined as an indefinite binary  quadratic form with integral coefficients as follows. Define $u$ to be the least positive solution of
$\pm m_2 x \equiv m_1 \mod (m)$ (this is sensible since $m$ and $m_2$ are relative prime) and define $v = (u^2 + 1)/ m$ (note $m$ divides $u^2 +1$ because of (\ref{fleq1.1})). The Markoff form associated to a~Markoff triple $(m,m_1,m_2)$ is then defined as
\[
f_m(x,y) = mx^2 + (3m-2u)xy + (v-3u)y^2\, .
\]

Let $f(x,y) = \alpha{x^2} + \beta{xy} +\gamma{y^2}$ be a real indefinite binary quadratic form. The discriminant $\delta(f) =  \beta^2 - 4\alpha\gamma$ is then strictly positive. We write
\[
\mu(f) = \hbox{inf} \, |f(x,y)| \quad \mbox{($x$, $y\/$ integers not both 0)}.
\]
Two quadratic forms $f(x,y)$, $f'(x,y)$ are \textsl{equivalent} if there are integers $a$,~$b$, $c$, $d$ such that $ad-bc = \pm 1$ and $f'(ax + by, cx +dy) = f(x,y)$, identically in $x$, $y$. Markoff \cite{flref11}, \cite{flref12} (see also Cassels \cite{flref4}, Dickson \cite{flref6}, Frobenius  \cite{flref7}, Korkine--Zolotareff \cite{flref10} and Remak \cite{flref14}) showed that for any real indefinite binary quadratic form $f$, the inequality $\mu(f) / \delta(f) > 1/3$ holds if and only if $f$ is equivalent to a multiple of a Markoff form.

For a real number $x$, $\| x\| = \min \{|x-p|: p\in Z\}$ is the distance of  $x$ to the nearest integer. Let $\theta$ be irrational and let us denote
\[
\begin{array}{lll}
v(\theta) = & \lim \inf \{ q \,\| q \theta\| : & q \in N \},
\cr
\varphi(\theta) = & \inf \{ q \,\| q \theta\| : &q \in N \}.
\end{array}
\]
Two real numbers $\theta$ and $\theta' = \frac{a \theta + b}{c\theta +d}$, where $a,b,c,d$ are integers with $ad-bc = \pm1$, are called \textsl{equivalent}. If $\theta$ is equivalent to $\theta'$, then $v(\theta) = v(\theta')$  (see  Cassels \cite{flref4}). We say that two real numbers $\theta$, $\theta'$ are $\pm$~\textsl{equivalent} if their sum or difference is an integer. Notice that $\theta$ is $\pm$~equivalent to $\theta'$ if and only if $\|q\theta\|=\|q\theta'\|$ for every $q\in N$. Hence, if $\theta$ and $\theta'$ are $\pm$~equivalent, then $\varphi(\theta)=\varphi(\theta')$.

The above Markoff result has also the following equivalent formulation in terms of approximation of irrationals by rationals (see also Cassels \cite{flref4}, Cusick--Flahive \cite{flref5}, Hurwitz \cite{flref9} and Schmidt \cite{flref15}):
\\[6pt]
(M1) \quad If $v(\theta) > \frac{1}{3}$, then $\theta$ is equivalent to a root of $f_m(x,1)=0$, where $f_m$ is a Markoff form. Conversely, if $\theta$ is equivalent to a root of $f_m(x,1) =0$, then
$$
v(\theta) = \frac{1}{\sqrt{9-4m^{-2}}} > 1/3\, ,
$$
and there are infinitely many solutions of $q\|q\theta\| < v(\theta)$.
The two roots of $f_m(x,1) = 0$ are equivalent to one another.

The main purpose of this article is to characterize all irrational numbers with constant $\varphi(\theta)>\frac{1}{3}$. We will prove the following theorem (see Theorem~\ref{flthe3.1} and  \ref{flthe4.4}): If $\varphi(\theta) > \frac{1}{3}$, then $\theta$ is $\pm$~equivalent to a root of a quadratic equation {$f_m(x,1) = 0$}, where $f_m$ is a Markoff form. Conversely, if $\theta$ is $\pm$~equivalent to a root of $f_m (x,1) = 0$, then
$$
\varphi(\theta) = m\, \| m\theta \| = \frac{2}{3+\sqrt{9-4m^{-2}}} > 1/3.
$$
Notice that roots both of
$$
f_1(x,1)= x^2 + x - 1 = 0
\quad\mbox{and}\quad
f_2(x,1)= 2x^2 + 4x - 2 = 0
$$
are $\pm$ equivalent. However, for every Markoff number $m > 2$ roots of $f_m(x,1) = 0$ are not $\pm$~equivalent.

A well known conjecture on the uniqueness of Markoff numbers/triples was first mentioned by Frobenius \cite{flref7} (see also D12 in Guy \cite{flref8}). It asserts that a Markoff triple is uniquely determined by its maximal element. The conjecture has only been proved for some special cases. Bargar \cite{flref1}, Button~\cite{flref2} and Schmutz \cite{flref16} proved independently that a Markoff triple $(m,m_1,m_2)$ is unique if  $m$ is either a prime power or twice a prime power. A stronger result has been obtained later by Button in \cite{flref3}; in particular, a Markoff triple $(m,m_1,m_2)$ is unique if $m \leq 10^{35}$. If the uniqueness conjecture is true, then, by Theorems~\ref{flthe3.1} and \ref{flthe4.4}, for every Markoff number $m> 2$, the set of all real numbers $\theta$ such that
$\varphi(\theta) = m \| m\theta \|$
is the union of two classes of $\pm$~equivalence, each of them determined by a root of the quadratic equation $f_m(x,1) = 0$.

\section{The continued fractions of roots of ${f_m(x,1)=0}$}

We use the notation $[a_0,a_1,\ldots]$ for simple continued fraction whose partial quotients are $a_0, a_1, \ldots $ . A subscript attached to a partial quotient will indicate the digit repeated that many times. Also we write a bar over the period. For example,
$[ 0,\overline{2,1_2,2} \,]=[0,2,1,1,2,2,1,1,2, \ldots\,]$.

Frobenius \cite{flref7} associated with each Markoff number $m > 2$ an ordered pair of relatively prime positive integers. These pairs  are called by Cusick--Flahive \cite{flref5} the \textsl{Frobenius coordinates} of Markoff numbers. Let $(m,m_1,m_2)$ be a Markoff triple, $m > 2$, and $(\mu, \nu)$ be the Frobenius coordinates of $m$. Suppose that $f_m(x,y)=mx^2 + (3m-2u)xy + (v-3u)y^2$ is the Markoff form associated to $(m,m_1,m_2)$. By Theorem 3 and 4 in Cusick--Flahive ([5, p.~23 and 27]), the positive root $\alpha_m$ of $f_m(x,1)=0$ has a purely periodic continued fraction with the partial quotients only 1 or 2 satisfying
\begin{equation}\label{fleq2.1}
\alpha_m = [0,2,1_{2r(1)}, 2,2,1_{2r(2)}2,2, \ldots ]
= [0,\overline{2,S(\mu, \nu),1,1,2} \, ]\, ,
\end{equation}
where the symmetric sequence $S(\mu, \nu)$ is given by
\begin{align*}
S(\mu,\nu) &= 1_{2r(1)},2,2,1_{2r(2)}, \ldots , 1_{2r(\nu -1)},2,2,1_{2r(\nu) -2}
\quad \hbox{for} \quad \nu > 1 \, ,
\cr
S(\mu,\nu) & = 1_{2\mu -2}\quad \hbox{for} \quad \nu = 1 \, ,
\end{align*}
and
\[
r(i)= [i\mu/\nu] - [(i-1)\mu/\nu] \quad \hbox{for} \quad i = 1, \ldots , \nu - 1 \, .
\]
Moreover
\begin{equation}\label{fleq2.2}
\frac{v}{u} = [ 0,2, S(\mu,\nu) ]  \quad \hbox{and} \quad
\frac{u}{m} = [ 0,2, S(\mu,\nu),2 ] \, .
\end{equation}
By Lemma~6 in Cusick--Flahive  ([5, p.~30]), the sequence $\{r(i)\}$ is \textsl{balanced}; that is
\[
\Big| \sum\limits^{k+n}_{j=k} r(j) - \sum\limits^{k+n}_{j=k} r(j+s) \Big|
\leq 1 \, ,
\]
for all positive integers $k,s$ and all integers $n\geq 0$.

We shall use the \textsl{Lagrange identity}
\[
[0,2,x] + [0,1,1,x] = 1\, , \quad \hbox{for} \quad x>0 \, .
\]
\begin{lem}\label{fllem2.1}
If $\alpha_m = [0,\overline{2,S(\mu, \nu),1,1,2} \, ]$ is a positive and $\beta_m$ is a negative root of $f_m(x,y) = 0$,  $m>2$, then
\[
-\beta_m -2 = [0,\overline{1,1, S(\mu,\nu), 2,2} \, ] \, ,
\]
and
\[
\beta_m+3 = [0,2, \overline{S(\mu,\nu),2,2,1,1} \, ] \, .
\]
\end{lem}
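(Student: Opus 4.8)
The plan is to derive both identities from Galois's theorem on purely periodic continued fractions, exploiting that the block $S(\mu,\nu)$ is a palindrome, and then to pass between the two displayed formulas using the Lagrange identity quoted above.

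The first identity I would reduce to a statement about reversing a period. Put $\gamma_m := 1/\alpha_m$; by (\ref{fleq2.1}) it is purely periodic, $\gamma_m=[\,\overline{2,S(\mu,\nu),1,1,2}\,]$, hence reduced, i.e.\ $\gamma_m>1$ and its algebraic conjugate $\overline{\gamma_m}$ lies in $(-1,0)$. Since $\alpha_m$ and $\beta_m$ are exactly the two roots of $f_m(x,1)=mx^{2}+(3m-2u)x+(v-3u)=0$, their reciprocals $1/\alpha_m=\gamma_m$ and $1/\beta_m$ are the two roots of the reversed equation $(v-3u)t^{2}+(3m-2u)t+m=0$; therefore $\overline{\gamma_m}=1/\beta_m$, that is, $-1/\overline{\gamma_m}=-\beta_m$. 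By Galois's theorem the continued fraction of $-1/\overline{\gamma_m}$ is obtained from that of $\gamma_m$ by reversing the period, and since $S(\mu,\nu)$ read backwards is $S(\mu,\nu)$ again, the reverse of the period $(2,S(\mu,\nu),1,1,2)$ is $(2,1,1,S(\mu,\nu),2)$. Thus
\[
-\beta_m=\bigl[\,\overline{2,1,1,S(\mu,\nu),2}\,\bigr].
\]
Deleting the leading partial quotient $2$ — which at the wrap-around of the period merges with the next $2$ to give the doubled $2$ — yields $-\beta_m-2=[\,0,\overline{1,1,S(\mu,\nu),2,2}\,]$, the first identity.

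For the second identity I would use the Lagrange identity. Set $x:=[\,\overline{S(\mu,\nu),2,2,1,1}\,]>0$; then the first identity reads $-\beta_m-2=[0,1,1,x]$, so $\beta_m+3=1-(-\beta_m-2)=1-[0,1,1,x]$, and $[0,2,x]+[0,1,1,x]=1$ gives
\[
\beta_m+3=[0,2,x]=\bigl[\,0,2,\overline{S(\mu,\nu),2,2,1,1}\,\bigr].
\]
(In fact the two identities are interchangeable: added together they are precisely the instance of the Lagrange identity with this $x$, so proving either one suffices.)

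The substantive work is in the first identity. One delicate point is the bookkeeping of the period under reversal and the shift past the leading $2$: obtaining $(1,1,S(\mu,\nu),2,2)$ as the new period, checking the consecutive $2$'s arise where claimed, and treating the (possibly empty, when $\mu=\nu=1$) block $S(\mu,\nu)$ with care. The other is selecting the correct branch in Galois's theorem — that it is $-1/\overline{\gamma_m}$, not $\overline{\gamma_m}$, carrying the reversed expansion — which is the condition $\overline{\gamma_m}=1/\beta_m\in(-1,0)$, i.e.\ $\beta_m<-1$, automatic from the pure periodicity of $\gamma_m$ (and visible a posteriori from $\beta_m=-2-[0,1,1,x]$ with $[0,1,1,x]\in(0,1)$). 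To stay self-contained one can replace the appeal to Galois by the classical continuant computation: with $p_k/q_k$ the convergents of $[\,\overline{2,S(\mu,\nu),1,1,2}\,]$ over one period of length $n$, $\gamma_m$ is a fixed point of $t\mapsto(p_{n-1}t+p_{n-2})/(q_{n-1}t+q_{n-2})$; reversal symmetry of continuants turns this into the transposed relation governing the reversed periodic fraction, and substituting $-1/\overline{\gamma_m}$ there shows it satisfies the reversed quadratic and is its root exceeding $1$.
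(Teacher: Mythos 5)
Your proof is correct, but it takes a genuinely different route from the paper's. The paper works backwards: it \emph{defines} $\gamma$ by $-\gamma-2=[0,\overline{1,1,S(\mu,\nu),2,2}\,]$, obtains $\gamma+3=[0,2,\overline{S(\mu,\nu),2,2,1,1}\,]$ from the Lagrange identity (so the two displayed identities are tied together exactly as you observe in your parenthetical remark), and then verifies directly that $\gamma$ is the negative root of $f_m(x,1)=0$: the tail of $\gamma+3$ after the initial block $0,2,S(\mu,\nu),2$ is the real number $-\gamma$, and since by (\ref{fleq2.2}) the fractions $v/u$ and $u/m$ are the last two convergents of $[0,2,S(\mu,\nu),2]$, one gets
$\gamma+3=\frac{-\gamma u+v}{-\gamma m+u}$, i.e.\ $m\gamma^2+(3m-2u)\gamma+v-3u=0$, whence $\gamma=\beta_m$ because $\gamma<-2$. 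You instead start from $\alpha_m$, invoke Galois's reversal theorem for reduced purely periodic continued fractions together with the palindromic symmetry of $S(\mu,\nu)$ to identify $-\beta_m=-1/\overline{\gamma_m}$ with the reversed period, and only then apply Lagrange. Your route buys conceptual transparency --- the lemma is revealed as a statement about the conjugate root and period reversal, and it uses only (\ref{fleq2.1}) and the symmetry of $S(\mu,\nu)$, not (\ref{fleq2.2}) --- at the cost of importing Galois's theorem, which the paper nowhere quotes; the paper's computation is self-contained given (\ref{fleq2.2}) and the standard M\"obius formula for convergents. The delicate points you flag (choice of branch $-1/\overline{\gamma_m}$, the wrap-around producing the doubled $2$, the possibly empty block $S(1,1)$) are all handled correctly, so I see no gap.
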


\begin{proof}
Set $-\gamma-2 = [0, \overline{1,1, S(\mu,\nu),2,2}\, ]$. By the Lagrange identity, we have
$\gamma+3 = [0,2, \overline{S(\mu,\nu),2,2,1,1} \, ]$. We conclude from (\ref{fleq2.2}) that $\sfrac{v}{u}$ and  $\sfrac{u}{m}$ are two consecutive convergents to $\gamma+3$, whence
\[
\gamma+3 = [0,2, S(\mu,\nu),2,-\gamma \, ] = \frac{-\gamma u +v}{-\gamma m +u} \, .
\]
Therefore $m\gamma^2 + (3m-2u)\gamma + v -3u=0$, whence $\gamma = \beta_m$.
%
\end{proof}

\section{The sufficient condition for $\varphi(\theta) > 1/3$}

If $\alpha = [ a_0, a_1 , a_2, \ldots ]$, then the Legendre theorem (see \cite{flref5}, \cite{flref13} or \cite{flref15}) says
\\[6pt]
(L) \quad if $q\|q\alpha\| < \frac{1}{2}$, then $q$ must be equal one of the denominator of the convergents
\[
p_n / q_n = [a_0, a_1, a_2,\ldots , a_n ], \qquad n=0,1,2, \ldots
\]
to $\alpha$.
By the  Perron theorem (see \cite{flref4}, \cite{flref13} or \cite{flref15}), we have
\\[6pt]
(P) $\quad q_n \|q_n \alpha \| = ([0,a_n, a_{n-1}, \ldots , a_1 ]  +
[a_{n+1}, a_{n +2}, a_{n +3}, \ldots \,])^{-1} \, . $
\\[6pt]
The following theorem was proved by Serret (see \cite{flref4}, \cite{flref5} or \cite{flref15}).
\\[6pt]
(S) \quad Suppose $\theta = [a_0,a_1,a_2, \ldots \,]$ and $\theta' = [b_0, b_1, b_2, \ldots \,]$ are
irrational. These numbers are equivalent if and only if there exist integers $k$ and $l$ such that
\[
a_{k+i} = b_{l+i}\, , \quad \hbox{for} \quad i \in N \, .
\]
We will also need the following elementary lemma concerning continued fractions ([5, p.~2] and [6, p.~81]).

\begin{lem}\label{fllem3.1}
Suppose
$$
\alpha = [a_0, a_1, a_2, \ldots , a_n , b_1, b_2 , \ldots \,] \quad \hbox{and} \quad \beta = [a_0, a_1, a_2, \ldots , a_n, c_1, c_2, \ldots \,],
$$
 where $n\geq 0$,
$a_0$ is an integer, and $a_ 1, \ldots , a_n, b_1, b_2,\ldots , c_1, c_2, \ldots \,$ are positive integers with $b_ 1 \neq c_1$.
Then, for $n$ odd, $\alpha < \beta$ if and only if $b_1 < c_1$; for $n$ even,
$\alpha < \beta$ if and only if  \hbox{$b_1 > c_1$}. Also, $\alpha < [a_0, a_1, a_2, \ldots , a_n ]$ when $n$ is odd, and $[a_0, a_1, a_2,\ldots , a_n ] < \alpha $ when $n$ is even.
\end{lem}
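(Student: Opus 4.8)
The plan is to reduce every assertion in the lemma to the classical matrix representation of convergents, turning each into a one-line sign computation. Let $p_k/q_k = [a_0,a_1,\ldots,a_k]$ be the $k$-th convergent of the common head, with the usual conventions $p_{-1}=1$, $q_{-1}=0$, and put $\xi = [b_1,b_2,\ldots]$, $\eta=[c_1,c_2,\ldots]$; since the $b_i$ and $c_i$ are positive integers, $\xi>0$ and $\eta>0$. First I would invoke the standard identities
\[
\alpha = \frac{p_n\xi+p_{n-1}}{q_n\xi+q_{n-1}}, \qquad \beta = \frac{p_n\eta+p_{n-1}}{q_n\eta+q_{n-1}}, \qquad p_nq_{n-1}-p_{n-1}q_n = (-1)^{n-1},
\]
which hold for all $n\geq 0$ (for $n=0$ the left denominator is simply $\xi$). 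Subtracting the first two and using the third,
\[
\alpha-\beta = \frac{(-1)^{n-1}(\xi-\eta)}{(q_n\xi+q_{n-1})(q_n\eta+q_{n-1})}.
\]
As $q_n\geq 1$, $q_{n-1}\geq 0$ and $\xi,\eta>0$, the denominator is positive, so the sign of $\alpha-\beta$ equals $(-1)^{n-1}$ times the sign of $\xi-\eta$. Consequently $\alpha<\beta$ is equivalent to $\xi<\eta$ when $n$ is odd, and to $\xi>\eta$ when $n$ is even.

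The remaining ingredient is the comparison of the tails: when $b_1\neq c_1$, the inequality $\xi<\eta$ holds precisely when $b_1<c_1$. I would deduce this from the fact that the continued fraction algorithm returns $b_1 = \lfloor\xi\rfloor$ and $c_1 = \lfloor\eta\rfloor$ — or directly: $b_1<c_1$ forces $\xi < b_1+1 \leq c_1 \leq \eta$, while $b_1>c_1$ forces the reverse chain of inequalities. (Here one adopts the usual normalization that a finite continued fraction does not end in a partial quotient equal to $1$; in all of the applications in this paper the continued fractions in sight are eventually periodic, hence infinite, so this convention is automatic.) Feeding this into the sign computation of the previous paragraph yields the two stated equivalences word for word.

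For the last sentence, observe that $[a_0,a_1,\ldots,a_n]=p_n/q_n$, and the identical manipulation gives
\[
\alpha-\frac{p_n}{q_n} = \frac{q_np_{n-1}-p_nq_{n-1}}{q_n(q_n\xi+q_{n-1})} = \frac{(-1)^n}{q_n(q_n\xi+q_{n-1})},
\]
which is positive exactly when $n$ is even; that is, $[a_0,\ldots,a_n]<\alpha$ for $n$ even and $\alpha<[a_0,\ldots,a_n]$ for $n$ odd, which is just the familiar statement that the even-indexed convergents lie below and the odd-indexed convergents lie above $\alpha$. (Alternatively this follows from the first part by formally taking the tail after $a_n$ to be $+\infty$.) I do not foresee a genuine obstacle: the only mildly delicate point is the trailing-$1$ ambiguity in the tail comparison, which disappears under the standard normalization, and everything else is the determinant identity $p_nq_{n-1}-p_{n-1}q_n=(-1)^{n-1}$ applied three times over.
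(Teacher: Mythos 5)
Your proof is correct. The paper does not prove this lemma at all --- it is quoted as an elementary known fact with references to Cusick--Flahive and Dickson --- so there is nothing to diverge from; your argument via the determinant identity $p_nq_{n-1}-p_{n-1}q_n=(-1)^{n-1}$ and the resulting formula $\alpha-\beta=(-1)^{n-1}(\xi-\eta)/\bigl((q_n\xi+q_{n-1})(q_n\eta+q_{n-1})\bigr)$ is the standard one and each step checks out, including the $n=0$ boundary case. You also correctly isolate the one genuine subtlety (the trailing-$1$ ambiguity in comparing $\xi$ and $\eta$ by their first partial quotients) and dispose of it by the usual normalization, which suffices for every application made of the lemma in this paper.
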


\begin{thm}\label{flthe3.1}
If  $\theta$ is $\pm$ equivalent to a root of $f_m(x,1)=0$, where $f_m$ is a~Mar\-koff form, then
\begin{eqnarray}\label{fleq3.1}
m\| m\theta \| & = & \frac{2}{3+\sqrt{9-4m^{-2}} },
\\ \cr
q \| q\theta \| & > &  m\| m \theta \| \quad \hbox{for }
 q\neq m, \ q \in N.\label{fleq3.2}
\end{eqnarray}
\end{thm}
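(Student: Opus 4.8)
The plan is to first reduce to a root. Since $\|q\theta\|=\|q\theta'\|$ for every $q$ when $\theta,\theta'$ are $\pm$-equivalent, it suffices to establish both displayed assertions for $\theta$ a root of $f_m(x,1)=0$; and since $q\mapsto q\|q\theta\|$ is a $\pm$-equivalence invariant, I may use the purely periodic representatives $\alpha_m=[0,\overline{2,S(\mu,\nu),1,1,2}\,]$ and $-\beta_m-2=[0,\overline{1,1,S(\mu,\nu),2,2}\,]$ of Lemma~\ref{fllem2.1}, whose convergents are explicit (the cases $m=1,2$, where the expansion is $[0,\overline 1\,]$ or $[0,\overline 2\,]$, are immediate; take $m>2$). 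Write $D=\sqrt{9-4m^{-2}}$; then $D^{2}+4m^{-2}=9$, and using $mv=u^{2}+1$ one gets $\delta(f_m)=(3m-2u)^{2}-4m(v-3u)=9m^{2}-4=m^{2}D^{2}$, so the two roots of $f_m(x,1)=0$ differ by $\sqrt{\delta(f_m)}/m=D$; recall also $\mu(f_m)=m$.

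The arithmetic engine is the following identity. If $p/q$ is a convergent of a root $\theta$ with $q\|q\theta\|<\tfrac12$, then $p$ is the nearest integer to $q\theta$, so $|p-q\theta|=\|q\theta\|$, while $|p-q\theta^{*}|=|(p-q\theta)+q(\theta-\theta^{*})|$ equals $qD-\|q\theta\|$ when $p/q$ lies between the two roots $\theta,\theta^{*}$ and $qD+\|q\theta\|$ otherwise. As $f_m(p,q)=m(p-\theta q)(p-\theta^{*}q)$ is a nonzero integer of absolute value $N\ge\mu(f_m)=m$, putting $t=q\|q\theta\|$ we obtain $N=mDt\pm mt^{2}/q^{2}$, hence
\[
t=\frac{2N/m}{D+\sqrt{D^{2}+4N/(mq^{2})}}\ \ (p/q\ \hbox{outside}),\qquad
t=\frac{2N/m}{D+\sqrt{D^{2}-4N/(mq^{2})}}\ \ (p/q\ \hbox{between}\ \theta,\theta^{*}).
\]

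To prove \eqref{fleq3.1}: by \eqref{fleq2.2}, $\sfrac um=[0,2,S(\mu,\nu),2\,]=[0,2,S(\mu,\nu),1,1\,]$, so $\sfrac um$ is the convergent $p_k/q_k$ of $\alpha_m$ with $p_k=u$, $q_k=m$, where $k=|S(\mu,\nu)|+3$ is odd ($|S(\mu,\nu)|$ is even); since the partial quotient following it is $2$, $u$ is the nearest integer to $m\alpha_m$, and by Lemma~\ref{fllem3.1} this convergent overestimates $\alpha_m$, hence lies outside the root interval. A direct check with $mv=u^{2}+1$ gives $f_m(u,m)=m$, so the first formula with $N=q=m$ and $D^{2}+4m^{-2}=9$ yields $m\|m\alpha_m\|=2/(D+3)=2/(3+\sqrt{9-4m^{-2}})$. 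For $\beta_m$, Lemma~\ref{fllem2.1} and \eqref{fleq2.2} show that $\sfrac um$ is the denominator-$m$ convergent of $\beta_m+3$ (so $u-3m$ is the nearest integer to $m\beta_m$), it lies outside that root interval, and $f_m(u-3m,m)=m$; the same formula gives $m\|m\beta_m\|=2/(3+\sqrt{9-4m^{-2}})$. Note $m\|m\theta\|=2/(3+D)<\tfrac12$ because $D>1$.

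To prove \eqref{fleq3.2}: let $q\neq m$. If $q\|q\theta\|\ge\tfrac12$ we are done, so by the Legendre theorem (L) take $q=q_n$ with $q_n\|q_n\theta\|<\tfrac12$. If $p_n/q_n$ lies between the roots, the second formula gives $q_n\|q_n\theta\|>N/(mD)\ge 1/D=1/\sqrt{9-4m^{-2}}>2/(3+D)$ (since $3+D>2D$). If $p_n/q_n$ lies outside and $N=m$, then $(p_n,q_n)$ is a minimal vector of $f_m$ on the outside; by the structure of the minimal vectors (the orbit of $(1,0)$ under an automorphism of $f_m$, whose first outside convergent of $\alpha_m$, resp.\ of $\beta_m+3$, is exactly $\sfrac um$) one has $q_n\ge m$, so the first formula gives $q_n\|q_n\theta\|=2/(D+\sqrt{D^{2}+4/q_n^{2}})\ge 2/(D+3)$, with equality only for $q_n=m$, the excluded value. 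If $p_n/q_n$ lies outside and $N\ge m+1$, the first formula makes $q_n\|q_n\theta\|>2/(3+D)$ equivalent to $q_n^{2}>4m^{2}/\bigl(4+(N-m)m(3+D)^{2}\bigr)$, and since that bound is $<4m/(3+D)^{2}<\tfrac49 m$, it holds once $q_n\ge\tfrac23\sqrt m$; because $q_n\ge m$ for $n\ge k$, only the finitely many early convergents of $\alpha_m$ (resp.\ $\beta_m+3$) with $q_n<\tfrac23\sqrt m$ remain. For each of these, by Perron's formula (P) one must show $[0,a_n,\dots,a_1]+[a_{n+1},a_{n+2},\dots]<[0,1,1,S(\mu,\nu),2\,]+[\,\overline{2,2,S(\mu,\nu),1,1}\,]$, whose right side equals $(m\|m\theta\|)^{-1}$ with $[0,1,1,S(\mu,\nu),2\,]=1-\sfrac um$ by the Lagrange identity. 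Carrying this out — using the periodicity of the expansion of $\alpha_m$, the palindromy of $S(\mu,\nu)$, Lemma~\ref{fllem3.1}, Serret's theorem (S), and, crucially, the balancedness of the exponents $\{r(i)\}$, which is exactly what forbids an early convergent from tying with the one of denominator $m$ — is the main obstacle; everything else reduces, via $\mu(f_m)=m$ and the identity above, to the elementary inequalities displayed.
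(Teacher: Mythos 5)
Your algebraic engine, writing $N=|f_m(p,q)|$ and solving $N=mDt\pm mt^{2}/q^{2}$ for $t=q\|q\theta\|$ with $D=\sqrt{9-4m^{-2}}$, is correct and genuinely different from the paper's route: it yields \eqref{fleq3.1} cleanly (the paper instead computes $u-m\alpha_m=2\bigl(3m+\sqrt{9m^2-4}\bigr)^{-1}<1/(2m)$ directly from the trace and discriminant of $f_m$), and, granting $\mu(f_m)=m$, it disposes of every convergent lying between the two roots (since $t>N/(mD)\ge 1/D>2/(3+D)$) and of every convergent with $q_n\ge m$, hence of all indices from the one with denominator $m$ onward. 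The difficulty is entirely in the early convergents, $q_n<m$, and there your argument has two genuine holes. First, in the case ``outside, $N=m$'' you assert $q_n\ge m$ by appealing to the minimal vectors of $f_m$ forming a single automorph orbit of $(1,0)$. That the value $+m$ is represented by $f_m$ only on that orbit (equivalently, that no convergent with denominator less than $m$ represents $m$) is a nontrivial arithmetic fact which you neither prove nor cite, and at those indices it is essentially equivalent in strength to the inequality being proved. Second, and more seriously, for ``outside, $N\ge m+1$, $q_n<\tfrac{2}{3}\sqrt m$'' you reduce to verifying the Perron inequality
\[
[0,a_n,\ldots,a_1]+[a_{n+1},a_{n+2},\ldots\,]<[0,1,1,S(\mu,\nu),2]+[\,\overline{2,S(\mu,\nu),1,1,2}\,]
\]
by hand, using the balancedness of the exponents and the symmetry of $S(\mu,\nu)$, and you explicitly defer this as ``the main obstacle.'' That deferred verification is not a loose end: it is the entire content of the paper's proof of \eqref{fleq3.2} for $m>2$, namely the four-case analysis built on the balanced sequences $\{r(j)\}$ and $\{z(j)\}$, inequalities \eqref{fleq3.3}--\eqref{fleq3.4}, and Lemma~\ref{fllem3.1}. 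Without it the theorem is not proved.

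A fair summary: you have a correct and rather more structural derivation of \eqref{fleq3.1}, together with a real reduction of \eqref{fleq3.2} to the finitely many convergents of denominator less than $m$; but the combinatorial machinery of the paper exists precisely to handle those, so the proposal as written is incomplete at its hardest point.
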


\begin{proof}
It is sufficient to prove (\ref{fleq3.1}) for $\theta = \alpha_m$ and $\theta = \beta_m$, where $\alpha_m$ is the positive and $\beta_m$ is the negative root of $f_m(x,1)=0$. Since
$$
f_m(x,1)=mx^2+ (3m-2u)xy + (v-3u)y^2,
$$
 the discriminant $d(f_m)=9m^2-4$ and $\alpha_ m+\beta_m=\frac{2u}{m} -3$. Hence,
\[
m\beta_m +3m-u  = u-m\alpha_m =
\frac{3m-\sqrt{9m^2-4}}{2}
 = \frac{2}{3m+\sqrt{9m^2 -4}}<\frac{1}{2m} .
\]
Thus, condition (\ref{fleq3.1})  is satisfied for $\theta=\alpha_m$ and $\theta= \beta_m$.

We will prove condition (\ref{fleq3.2}). Suppose first that $m \leq 2$. Then $\alpha_1 = (\sqrt{5} - 1 )/2 = [0,\overline{1}\,]$ and
 $\alpha_2 = (\sqrt{2} - 1 ) = [0,\overline{2}\,]$. Thus $p_n / q_n = [0,1_n]$ ($p_n / q_n = [0,2_n]$) are convergents to $\alpha_1$ ($\alpha_2$, respectively). Lemma~\ref{fllem3.1} yields
 \[
 \begin{array}{l}
 [0,1_n] + [\,\overline{1} \,]  <  [0,1] + [\,\overline{1}\,] \quad \hbox{ for }
  n > 1,
 \cr
  [0,2_n] + [\,\overline{2} \,]  <  [0,2] + [\,\overline{2}\,] \quad \hbox{ for }
  n > 1.
 \end{array}
 \]
 Hence, by (P), $q_n \|q_n\alpha_1\| > \|\alpha_1\|$ ($q_n \|q_n\alpha_2\| >  2 \| 2 \alpha_2\|$, respectively), for $n > 1$. Thus, by (L) and equality (\ref{fleq3.1}), condition (\ref{fleq3.2}) holds for $\theta = \alpha_1$ ($\theta = \alpha_2$). Since $\beta_1$ ($\beta_2$) is $\pm$~equivalent to $\alpha_1$ ($\alpha_2$), condition (\ref{fleq3.2}) is satisfied.

 Now we assume that $m>2$. Our aim is to prove condition (\ref{fleq3.2}) for $\theta = \alpha_m$ and $\theta = -\beta_m -2$. By (\ref{fleq2.1}), we have
 \begin{align*}
 \alpha_m & = [0,2,1_{2r(1)}, 2,2,1_{2r(2)},2,2, \ldots \,]
= [0,\overline{2,S(\mu, \nu),1,1,2} \, ]
\cr
& = [0, \overline{2,1_{2r(1)}, 2,2,1_{2r(2)}, \ldots , 1_{2r(\nu -1)},2,2,1_{2r(\nu)}, 2} \, ],
\end{align*}
 where the sequence $\{r(j)\}$ is balanced. By Lemma~\ref{fllem2.1}, we obtain
 \begin{align*}
 - \beta_m -2 & = [0,1_{2z(1)}, 2,2,1_{2z(2)},2,2, \ldots \,]
= [0,\overline{1,1,S(\mu, \nu),2,2} \, ]
\cr
& = [0,\overline{1_{2z(1)}, 2,2,1_{2z(2)}, \ldots , 1_{2z(\nu -1)},2,2,1_{2z(\nu)}, 2,2} \, ].
\end{align*}
Since $\alpha$ and $\beta$ are equivalent and the sequence $\{r(j)\}$ is balanced, by (S) the purely periodic sequence $\{z(j)\}$ is also balanced. Since the sequence $S(\mu,\nu)$ is symmetric, we have
 \[
 \begin{array}{rllrll}
r(1) & = & r(\nu)-1, \quad & z(1) &=& z(v)+1,
\cr
r(j+1) & = & r(\nu-j), \quad & z(j+1) &=& z(\nu -j), \hbox{ for }  1 \leq j \leq \nu -2.
\end{array}
\]
 Hence,
\begin{equation}\label{fleq3.3}
\begin{array}{lcl}
     \sum\limits^k_{j=1} r(j)& < & \sum\limits^{k-1}_{j=0}
 r(\nu -j),
 \cr
  \sum\limits^k_{j=1} z(j) & > & \sum\limits^{k-1}_{j=0} z(\nu-j),   \hbox{ for } 1\leq k < \nu.
\end{array}
\end{equation}
 Since the sequences $\{r(j)\}$ and $\{z(j)\}$ are purely periodic with the period~$\nu$, we obtain
\begin{equation}\label{fleq3.4}
\begin{array}{lcl}
 \sum\limits^{l\nu}_{j=1} r(s+j) & = &  \sum\limits^{l\nu}_{j=1} r(j),
 \cr
  \sum\limits^{l\nu}_{j=1} z(s +j) & = & \sum\limits^{l\nu}_{j=1} z(j),
 \hbox{ for $s$, $l \in N$}.
\end{array}
\end{equation}
Let us denote
\[
\alpha_m  = [0,a_1, a_2, \ldots \,] = [0,\overline{2,S(\mu, \nu),1,1,2} \, ]
\]
and
\[
-\beta_m -2  = [0,b_1,b_2, \ldots \,] = [0,\overline{1,1,S(\mu, \nu),2,2} \, ].
\]
By condition (\ref{fleq2.1}) and the Lagrange identity we have
\[
\frac{u}{m} = [ 0,2, S(\mu,\nu),1,1 ]
\quad\mbox{and}\quad
1- \frac{u}{m}= [0,1,1,S(\mu,\nu),2 ].
\]
Hence, $\frac{u}{m}$ and $1- \frac{u}{m}$ are convergents to $\alpha_m$ and $-\beta_m -2$, respectively. Thus, by (P), (L) and equality (\ref{fleq3.1}), it is sufficient to prove that
\begin{align*}
[0,a_n, a_{n-1},& \ldots , a_1] + a_{n+1} + [0, a_{n+2}, a_{n+3}, \ldots \,]
\\& < [0,1,1,S(\mu,\nu), 2] + 2 +
[0,\overline{2,S(\mu,\nu),1,1,2} \,]
\end{align*}
for $[0,a_1, \ldots , a_n ] \neq [0,2,S(\mu,\nu), 1,1 ]$, and
\begin{align*}
[0,b_n, b_{n-1},& \ldots , b_1] + b_{n+1} + [0, b_{n+2}, b_{n+3}, \ldots \, ]
\\& <[0,2,S(\mu,\nu),1, 1] + 2 + [0,\overline{1,1,S(\mu,\nu),2,2} \,]
\end{align*}
for  $[0,b_1, \ldots , b_n ]\neq [0,1,1,S(\mu,\nu),2 ]$.
We assume that
\[
[0,a_1, \ldots , a_n ] \neq [0,2,S(\mu,\nu), 1,1 ]
\quad
\mbox{and}
\quad
 [0,b_1, \ldots , b_n ] \neq
[0,1,1,S(\mu,\nu),2 ].
\]
Let us consider the following cases:
\begin{flushleft}
\begin{tabular}{l}
$\hbox{(1)} \qquad a_{n+1} = 2  \quad (\hbox{or }   b_{n+1} =2)
  \quad \hbox{and} \ n \ \hbox{is odd},$
  \\
   $\hbox{(2)} \qquad a_{n+1} = 2 \quad (\hbox{or }   b_{n+1} =2)
  \quad \hbox{and} \ n \ \hbox{is even},$
  \\
   $\hbox{(3)} \qquad a_{n+1} = 1 \quad (\hbox{or }  b_{n+1} =1)
  \quad \hbox{and} \ n \ \hbox{is odd},$
  \\
   $\hbox{(4)} \qquad a_{n+1} = 1  \quad (\hbox{or }   b_{n+1} =1)
  \quad \hbox{and} \ n \ \hbox{is even}.$
  \end{tabular}
  \end{flushleft}
Case (1). There exists $s\geq1$, $s\neq \nu$, such that
 \[
   [0,a_1, \ldots , a_n] = [0,2,1_{2r(1)}, 2,2,1_{2r(2)}, \ldots , 1_{2r(s)}]
 \]
  ({or}
  \[
   [0,b_1, \ldots , b_n] =
   [0,1_{2z(1)}, 2,2,1_{2z(2)}, \ldots , 1_{2z(s)},2]),
    \]
 respectively).

\noindent
 Since the sequence $\{r(j)\}$ (or $\{z(j)\}$) is balanced, by condition (\ref{fleq3.3}) we have
 \[
 \begin{array}{ll}
 &\sum\limits^{k-1}_{j=0} r(s-j) \leq \sum\limits^{k-1}_{j=0} r(\nu -j),
 \hbox{ for }  1\leq k \leq \min(\nu,s),
 \cr
 &\sum\limits^{s-1}_{j=0} r(s-j)  < \sum\limits^{s-1}_{j=0} r(\nu -j),
 \hbox{ for }  s < \nu,
 \cr
 (\hbox{or} \quad
 &\sum\limits^{k-1}_{j=0} z(s-j) \geq \sum\limits^{k-1}_{j=0} z(\nu -j),
 \hbox{ for }  1\leq k \leq \min(\nu,s),
 \cr
 &\sum\limits^{s-1}_{j=0} z(s-j) > \sum\limits^{s-1}_{j=0} z(\nu -j),
 \hbox{ for }  s < \nu).
 \end{array}
 \]
 Hence, if $s<\nu$, then by Lemma~\ref{fllem3.1}, it follows that
\[
[0,1_{2r(s)}, 2,2, \ldots , 1_{2r(1)},2 ]
 <  [0,1_{2r(\nu)}, 2,2, \ldots , 1_{2r(1)},2] = [0,1,1,S(\mu,\nu),2]
\]
({or}
\[
[0,2,1_{2z(s)},2,2, \ldots , 1_{2z(1)}] < [0,2,1_{2z(\nu)},2,2, \ldots , 1_{2z(1}] = [0,2,S(\mu,\nu),1,1 ] ,
\]
respectively).

\noindent
Indeed, after pairing off the consecutive equal terms 1 or 2 on the left and on the right, we must come to a term 2 on the left (on the right, respectively) and the corresponding term 1 on the right (on the left, respectively).
If $s>\nu$, then by Lemma~\ref{fllem3.1}, it follows that
\begin{align*}
[0,1_{2r(s)}, 2,2,& \ldots , 1_{2r(1)},2 ]
 < [0,1_{2r(s)}, 2,2, \ldots , 1_{2r(s-\nu + 1)},2 ]\\
 &\leq  [0,1_{2r(\nu)}, 2,2, \ldots , 1_{2r(1)},2] = [0,1,1,S(\mu,\nu),2]
\end{align*}
({or}
\begin{align*}
  [0,2,1_{2z(s)},2,2,& \ldots , 1_{2z(1)} ]
 <  [0,2,1_{2z(s)}, 2,2, \ldots , 1_{2z(s-\nu + 1)}]\\
 & \leq [0,2,1_{2z(v)}, 2,2, \ldots , 1_{2z(1)}]=[0,2,S(\mu,\nu),1,1 ],
 \end{align*}
respectively).

\noindent
Analogically, by conditions (\ref{fleq3.3}) and (\ref{fleq3.4}) we have
 \[
 \begin{array}{ll}
 &\sum\limits^{k}_{j=1} r(s+j)  \geq  \sum\limits^{k}_{j=1} r(j),
 \hbox{ for $k \in N$} ,
 \cr
 (\hbox{or} \quad
  &\sum\limits^{k}_{j=1} z(s+j)  \leq  \sum\limits^{k}_{j=1} z(j),
 \hbox{ for $k \in N$)}.
 \end{array}
 \]
Hence, by Lemma~\ref{fllem3.1}, we obtain
\begin{align*}
[0,2,1_{2r(s+1)},2,2, 1_{2r(s+2)}, \ldots \,] & \leq [0,2,1_{2r(1)}, 2,2,1_{2r(2)}, \ldots \, ] \\ & =  [0,\overline{2,S(\mu,\nu),1,1,2} \, ]
\end{align*}
 ({or}
\begin{align*}
 [0,1_{2z(s+1)},2,2,1_{2z(s+2)}, \ldots \,] & \leq [0,1_{2z(1)}, 2,2,1_{2z(2)}, \ldots \,] \\&  = [0,\overline{1,1,S(\mu,\nu), 2,2}\,],
 \end{align*}
  respectively)\,.
\begin{flushleft}
Case (2). There exists $s \geq 1$ such that
\end{flushleft}
\[
[0,a_1, \ldots , a_n] =[0,2,1_{2r(1)},2,2, 1_{2r(2)}, \ldots , 1_{2r(s)},2 ]
\]
({or}
\[
 [0,b_1, \ldots , b_n ] =
 [0,1_{2z(1)},2,2,1_{2z(2)}, \ldots , 1_{2z(s)}  ],
\]
respectively).

\noindent
Since the sequence $\{r(j)\}$ (or $\{z(j)\}$) is balanced, by conditions  (\ref{fleq3.3}) and (\ref{fleq3.4}), we have
\[
 \begin{array}{lll}
 &\sum\limits^{k-1}_{j=0} r(s-j) \geq \sum\limits^{k}_{j=1} r(j), &
 \hbox{ for} \quad  1 \leq k \leq s,
 \cr
 &\sum\limits^{k}_{j=1} r(s+j) \leq \sum\limits^{k-1}_{j=0} r(\nu -j), &
 \hbox{ for} \quad  1 \leq k \leq  \nu,
 \cr
 (\hbox{or} \quad
 &\sum\limits^{k-1}_{j=0} z(s-j) \leq \sum\limits^{k}_{j=1} z(j), &
 \hbox{ for} \quad  1 \leq k \leq s,
 \cr
 &\sum\limits^{k}_{j=1} z(s+j) \geq \sum\limits^{k-1}_{j=0} z(\nu -j), &
 \hbox{ for} \quad  1 \leq k \leq  \nu).
 \end{array}
 \]
Hence, by Lemma~\ref{fllem3.1}, we have
\begin{align*}
  [0,2,1_{2r(s)},2,2, \ldots, 1_{2r(1)},2]& \leq [0,2,1_{2r(1)},2,2, \ldots, 1_{2r(s)},2]\\ & <
  [0,\overline{2,S(\mu,\nu),1,1,2} \, ]
\end{align*}
 {and}
\begin{align*}
 [0,1_{2r(s+1)},2,2,1_{2r(s+2)}, \ldots \,] & < [0,1_{2r(\nu)}, 2,2, \ldots , 1_{2r(1)},2] \\ & =
 [0,1,1, S(\mu,\nu), 2]
 \end{align*}
 ({or}
 \begin{align*}
  [0,1_{2z(s)},2,2, \ldots, 1_{2z(1)}]& \leq [0,1_{2z(1)},2,2, \ldots, 1_{2z(s)}]\\
 & < [0,\overline{1,1, S(\mu,\nu), 2,2}\,]
  \end{align*}
  {and}
  \begin{align*}
   [0,2,1_{2z(s+1)},2,2, 1_{2z(s+2)}, \ldots \, ] & < [0,2,1_{2z(\nu)}, 2,2, \ldots , 1_{2z(1)}]\\ & =
    [0,2, S(\mu,\nu),1,1],
  \end{align*}
  respectively).
\begin{flushleft}
Case (3). There exists $s \geq 1$ and $0< k \leq r(s)$ (or $0 \leq k < z(s)$)
such that
\end{flushleft}
\[
[0,a_1, \ldots , a_n] =[0,2,1_{2r(1)},2,2, 1_{2r(2)}, \ldots , 1_{2r(s)-2k} ]
\]
({or}
\[
 [0,b_1, \ldots , b_n ] =
 [0,1_{2z(1)},2,2,1_{2z(2)}, \ldots , 1_{2z(s) -2k-1}  ],
\]
respectively).

\noindent
Since $r(s)-k < r(s) \leq r(\nu)$ (or $k < z(s) \leq z(1)$), Lemma~\ref{fllem3.1} yields
\begin{align*}
[0,1_{2r(s)-2k},2,2, 1_{2r(s-1)}, \ldots , 1_{2r(1)} ,2 ] & < [0,1_{2r(\nu)}, 2,2, \ldots , 1_{2r(1)},2 \,] \\ & =
 [0,1,1, S(\mu,\nu), 2]
 \end{align*}
({or}
\begin{align*}
 [0,1_{2k},2,2,1_{2z(s+1)}, \ldots \, ] & < [0,1_{2z(1)}, 2,2,1_{2z(2)}, \ldots \,]
 \\ & = [0, \overline{1,1, S(\mu,\nu),2,2}\,],
 \end{align*}
respectively)\,.
\begin{flushleft}
Case (4). There exists $s \geq 1$ and $0\leq k < r(s)$ (or $0 < k \leq z(s)$)
such that
\end{flushleft}
\[
[0,a_1, \ldots , a_n] =[0,2,1_{2r(1)},2,2, 1_{2r(2)}, \ldots , 1_{2r(s)-2k-1} ]
\]
({or}
\[
 [0,b_1, \ldots , b_n ] =
 [0,1_{2z(1)},2,2,1_{2z(2)}, \ldots , 1_{2z(s) -2k} ],
 \]
respectively).

\noindent
Since $k < r(s) \leq r(\nu)$ (or $ z(s)-k < z(s) \leq z(1)$), Lemma~\ref{fllem3.1} yields
\begin{align*}
[0,1_{2k},2,2, 1_{2r(s+1)}, \ldots \,] & < [0,1_{2r(\nu)}, 2,2, \ldots , 1_{2r(1)},2 \,] \\ & =
 [0,1,1, S(\mu,\nu), 2]
\end{align*}
({or}
\begin{align*}
 [0,1_{2z(s)-2k},2,2,1_{2z(s-1)}, \ldots  , 1_{2z(1)}, 2 ] & < [0,1_{2z(1)}, 2,2,1_{2z(2)}, \ldots \,] \\
 & = [0, \overline{1,1, S(\mu,\nu),2,2}\,],
\end{align*}
respectively).
\end{proof}

\section{The necessary condition for ${\varphi({\theta})  > 1/3}$}
Markoff \cite{flref11} (see Cusick--Flahive \cite{flref5} and Dickson \cite{flref6}) characterized all doubly infinite sequences of positive integers $A_d = \ldots , a_{-1}, a_{-2}, a_0, a_1, a_2 , \ldots $ such that
\[
\lambda_n (A_d) = [0,a_n, a_{n-1}, \ldots ] +
[a_{n+1}, a_{n+2}, \ldots ] < 3 \quad \hbox{for }  n \in N.
\]
In particular,
\\[6pt]
(M2) if $\lambda_n(A_d) < 3$  for all $ n$, and $A_d \neq \overline{1}$, then $A_d$ is of the form
\[
\ldots,2,2,1_{2r(-1)},2,2,1_{2r(0)},2,2,1_{2r(1)},2,2, \ldots
\]
for some sequence $r(i)$ of nonnegative integers which satisfies
\[
|r(i+1)-r(i)| \leq 1 \quad \hbox{for all }  i.
\]
By analogy to the Markoff result (see Theorem~3 in Cusick--Flahive [5, p.~6]), in Theorem~\ref{flthe4.1} we characterize all sequences of positive integers $A= a_1, a_2 , \ldots $ such that $a_1 \geq 2$ and
\[
\mu_n (A) = [0,a_n,a_{n-1}, \ldots , a_1] +
[ a_{n+1}, a_{n+2} , \ldots ] < 3\quad \hbox{for } n \in N.
\]
We will need the following lemma which follows immediately from the Lagrange identity and Lemma~\ref{fllem3.1} (see Lemma~3 in Cusick--Flahive [5,~p.~3]).
\begin{lem}\label{fllem4.1}
For each even integer $n \geq 2$ and any real $x\geq 1$, $y\geq 1$, we have the equivalence:
$[2,1_n,x] + [0,2,1_{n-2},y] \leq 3$ if and only if $x \geq y$, where on the left the equality holds if and only if $x = y$.
\end{lem}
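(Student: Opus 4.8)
The plan is to use the Lagrange identity to absorb the first continued fraction, after which the claim reduces to a one–variable monotonicity statement covered by Lemma~\ref{fllem3.1}.

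First I would rewrite the leading term. Since $n\ge 2$ is even, $[2,1_n,x]=2+[0,1,1,1_{n-2},x]$, and setting $w=[1_{n-2},x]$ (so $w=x$ when $n=2$, and $w\ge 1>0$ in general), the Lagrange identity $[0,2,w]+[0,1,1,w]=1$ gives
\[
[2,1_n,x]=2+\bigl(1-[0,2,w]\bigr)=3-[0,2,1_{n-2},x].
\]
Hence
\[
[2,1_n,x]+[0,2,1_{n-2},y]=3+\bigl([0,2,1_{n-2},y]-[0,2,1_{n-2},x]\bigr),
\]
so the inequality $[2,1_n,x]+[0,2,1_{n-2},y]\le 3$ is equivalent to $[0,2,1_{n-2},y]\le[0,2,1_{n-2},x]$, and the left–hand side equals $3$ exactly when $[0,2,1_{n-2},y]=[0,2,1_{n-2},x]$.

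Second, I would show that $z\mapsto[0,2,1_{n-2},z]$ is strictly increasing for $z>0$; this finishes the proof, since then $[0,2,1_{n-2},y]\le[0,2,1_{n-2},x]$ holds iff $y\le x$, with equality iff $x=y$. For the monotonicity, observe that in $[0,2,1_{n-2},z]$ the parameter $z$ occupies the partial quotient of index $n$ (following $a_0=0$, $a_1=2$, and the $n-2$ ones in positions $2,\dots,n-1$); since $n$ is even, increasing a partial quotient in an even position strictly increases the continued fraction, which is precisely the comparison content of Lemma~\ref{fllem3.1}. Equivalently, writing $p_k/q_k$ for the convergents of $[0,2,1_{n-2},\dots]$, one has $[0,2,1_{n-2},z]=(p_{n-1}z+p_{n-2})/(q_{n-1}z+q_{n-2})$, whose derivative in $z$ has the fixed sign dictated by $p_{n-1}q_{n-2}-p_{n-2}q_{n-1}$, equal to $1$ for even $n$, hence positive; in particular this Möbius map is injective, which yields the equality clause.

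There is no genuine obstacle; the only points to watch are the parity bookkeeping — that the relevant partial–quotient index is $n$, hence even, so the continued fraction is increasing rather than decreasing in $z$ — and the (immediate) remark that the comparison part of Lemma~\ref{fllem3.1} extends from integer partial quotients to the real parameters $x,y\ge 1$ via the Möbius form just displayed.
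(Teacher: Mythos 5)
Your proof is correct and follows essentially the route the paper intends: it invokes exactly the Lagrange identity to rewrite $[2,1_n,x]=3-[0,2,1_{n-2},x]$ and then the monotonicity content of Lemma~\ref{fllem3.1} (made rigorous for real $x,y\ge 1$ via the M\"obius form with $p_{n-1}q_{n-2}-p_{n-2}q_{n-1}=1$ for $n$ even). The paper gives no further detail beyond citing these two ingredients, so your write-up is simply the worked-out version of the same argument.
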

\begin{thm}\label{flthe4.1} Let  $A= a_1, a_2 , \ldots $ be a sequence of positive integers such that $a_1 \geq 2$ and $A\neq 2, \overline{1}$.
In order to have $\mu_n(A) <3$  for all $ n$ it is necessary and sufficient that $A$ has the form
\begin{equation}\label{fleq4.1}
2,1_{2r(1)},2,2,1_{2r(2)},2,2,1_{2r(3)},2,2, \ldots
\end{equation}
where the $r(i)$ are nonnegative integers with the properties:
\begin{description}
\item[\mbox{\rm(A)}\ \ ]  $|r(i+1)-r(i)| \leq 1$, for all $i$;
\item[\mbox{\rm(B)}\ \ ] if $r(i+1)-r(i)$ is $-1$ or $+1$, respectively, then the first of
 the integers $r(i+j+1)-r(i-j)$ ($ 1 \leq j < i$) which is not zero is positive or negative, respectively;
\item[\mbox{$(\mathrm{C}_{01})$} \ ] $r(1) \leq r(2)$. Moreover, if $ r(i+1)- r(i)= -1$, then there exists
 $ 1 \leq j < i$ such that $r(i+j+1) > r(i-j)$.
\end{description}
\end{thm}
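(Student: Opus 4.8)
The plan is to run the argument in close parallel with the Markoff characterisation (M2), which handles doubly-infinite sequences, while carefully tracking the extra boundary data forced by the fact that here the sequence $A$ is only one-sided and begins with $a_1\geq 2$. First I would establish the structural claim that every $A$ with $\mu_n(A)<3$ for all $n$ and $A\neq 2,\overline{1}$ must consist, after the initial $2$, of blocks of an even number of $1$'s separated by blocks $2,2$; equivalently, $A$ has the shape (\ref{fleq4.1}) for some sequence of nonnegative integers $r(i)$. The reason the number of consecutive $1$'s between two $2$'s must be even, and the number of consecutive $2$'s must be exactly two, is exactly the local constraint $\lambda_n<3$ analysis in (M2): a run of three $2$'s or an odd run of $1$'s produces some index $n$ where $[0,a_n,a_{n-1},\ldots,a_1]+[a_{n+1},a_{n+2},\ldots]\geq 3$, and this is checked by the continued-fraction comparison Lemma \ref{fllem3.1} together with the Lagrange identity. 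I would also use the initial condition $a_1=2$ to see that the pattern is anchored correctly at the left end (so there is no partial $2,2$ block nor a partial block of $1$'s preceding $a_1$), which is precisely what turns the doubly-infinite statement into the one-sided statement with a genuine left boundary.

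Next I would prove that (A), the bounded-difference condition $|r(i+1)-r(i)|\leq 1$, is necessary. Here the key tool is Lemma \ref{fllem4.1}: reading the sequence at the junction between the $r(i)$-block and the $r(i+1)$-block, the inequality $\mu_n(A)<3$ translates, via (P)-style reversal of the initial segment and the Lagrange identity, into a comparison of two continued fractions of the form $[2,1_{2k},x]$ and $[0,2,1_{2k-2},y]$; Lemma \ref{fllem4.1} says this forces $x\geq y$, and unwinding what $x$ and $y$ are in terms of the surrounding $r$-values yields $r(i+1)\leq r(i)+1$, and symmetrically $r(i)\leq r(i+1)+1$. For the refined monotone-type conditions (B) and $(\mathrm{C}_{01})$ I would argue by iterating this comparison: if $r(i+1)-r(i)=\pm 1$, then the same Lemma \ref{fllem4.1} inequality, applied now at the nested junctions obtained by peeling off one matched block from each side, forces the first nonzero value of $r(i+j+1)-r(i-j)$ to have the sign dictated by (B); the point is that the "slack" in the strict inequality of Lemma \ref{fllem4.1} propagates inward and cannot be cancelled until a strict sign appears. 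Condition $(\mathrm{C}_{01})$ is the same phenomenon applied at the very left boundary: because the sequence terminates on the left at the $r(1)$-block preceded by the single $2$, the would-be block $r(0)$ is effectively forced to be large (it "is" the left-infinite tail), so $r(1)\leq r(2)$, and if $r(i+1)-r(i)=-1$ the compensating positive term must occur at a finite index $j<i$ rather than being pushed off to $-\infty$ as it could be in the doubly-infinite setting.

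For sufficiency I would reverse the implications: assuming $A$ has the form (\ref{fleq4.1}) with (A), (B), $(\mathrm{C}_{01})$, I must show $\mu_n(A)<3$ for every $n$. Fix $n$; it lies either inside a block of $1$'s or inside a $2,2$ block, and in each case $\mu_n(A)$ is a continued-fraction expression whose two halves I would bound using Lemma \ref{fllem3.1} and Lemma \ref{fllem4.1}. The estimate reduces, after pairing off equal leading terms on the two sides, to comparing partial sums $\sum r(\cdot)$ over corresponding ranges on the left (reversed, finite, boundary-anchored) side and the right (infinite) side; conditions (A) and (B)/$(\mathrm{C}_{01})$ are exactly what guarantee that when the first discrepancy in such a paired comparison appears it has the sign making $\mu_n<3$ strictly. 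I expect the main obstacle to be precisely the bookkeeping at the left boundary for the sufficiency direction: one has to handle the indices $n$ near the start (where the reversed initial segment is short and the single leading $2$ plays an asymmetric role) separately from the generic interior indices, and verify that $(\mathrm{C}_{01})$ — rather than merely (A) and (B) — is what rules out the boundary failure; getting the case division clean there, and matching it exactly against the statement of Lemma \ref{fllem4.1} for even $n\geq 2$, is where the care is needed.
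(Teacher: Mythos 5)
Your plan follows the paper's proof essentially step for step: the reduction to the form (\ref{fleq4.1}) and to condition (A) via the (M2)-type analysis (which the paper itself omits, citing Cusick--Flahive), then the use of Lemma~\ref{fllem4.1} at the block junctions --- with Lemma~\ref{fllem3.1} comparing the reversed prefix against the suffix --- to derive (B) and $(\mathrm{C}_{01})$ by contradiction, and the same junction-by-junction comparison for sufficiency, with the left boundary handled by $(\mathrm{C}_{01})$. The only difference is presentational: the paper packages the prefix/suffix comparison into the quantities $x(i)$ and $y(i)$ and applies Lemma~\ref{fllem4.1} once per junction (splitting sufficiency into the three cases $r(i)-r(i+1)\in\{-1,0,1\}$) rather than ``iterating'' the comparison inward as you describe, but the underlying mechanism is the same.
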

\begin{proof}
The proof that $A$ has the form (\ref{fleq4.1}) and satisfies condition (A) is similar to that in (M2) (see Cusick--Flahive [5, Theorem~2 and Corollary, p.~5]), and will be omitted. Let $A$ be of the form (\ref{fleq4.1}). Set
\[
\begin{array}{ll}
x(i):= &
\left\{
\begin{array}{ll}
2 & \quad \hbox{for }  i=1,
\\
{}[2,2,1_{2r(i-1)}, \ldots ,1_{2r(1)},2] & \quad \hbox{for } i>1,
\end{array}
\right.
\\[12pt]
y(i) := & \ [2,2,1_{2r(i+2)} , \ldots ].
\end{array}
\]
Suppose, in contradiction to (B), that there exists a pair of integers $k < l$ such that $r(l+1)= r(l)+1$, $r(l+j+1)= r(l-j)$, for $1\leq j < k$, and $r(l+k+1)> r(l-k)$. By Lemma~\ref{fllem3.1},
$y(l) < x(l)$. Thus Lemma~\ref{fllem4.1} gives
\[
\mu_l (A) = [2,1_{2r(l+1)},y(l)] + [0,2,1 _{2r(l)},x(l) ] > 3.
\]
Now assume, in contradiction to (C$_{01}$), that there exists integer $l$ such that $r(l+1)=r(l)-1$ and $r(l+j+1)= r(l-j)$, for $1\leq j < l$. By Lemma~\ref{fllem3.1},
$x(l) < y(l)$. Thus, Lemma~\ref{fllem4.1} gives
\[
\mu_l (A) = [2,1_{2r(l)},x(l)] + [0,2,1 _{2r(l+1)},y(l) ] > 3.
\]

For the sufficiency part of the theorem consider any $i\geq 1$ and let ${x = x(i)}$ and $y=y(i)$. By (A), we have $|r(i)-r(i+1)|\leq 1$. If $2r(i) = n+2$ and $2r(i+1)= n$, then $x > y > [1_2, x]$, because of (B), (C$_{01}$) and Lemma~\ref{fllem3.1}. Hence, Lemma~\ref{fllem4.1} gives
\[
\begin{array}{l}
[2,1_{n+2},x] + [0,2,1_{n},y] < 3,
\cr
[2,1_{n}, y] + [0,2,1_{n-2}, 1_{2},x ] < 3.
\end{array}
\]
If $2r(i)= n$ and $2r(i+1)= n+2$, then $y > x > [1_2,y]$, because of (B) and Lemma~\ref{fllem3.1}. Hence, Lemma~\ref{fllem4.1} gives
\[
\begin{array}{l}
[2,1_{n+2},y] + [0,2,1_{n},x] < 3,
\cr
[2,1_{n},x ] + [0,2,1_{n-2},1_{2}, y] < 3.
\end{array}
\]
If $2r(i)= 2r(i+1)= n$, then Lemma~\ref{fllem4.1} immediately gives
\[
\begin{array}{l}
[2,1_{n},x ] + [0,2,1_{n-2},1_{2}, y] < 3,
\cr
[2,1_{n},y] + [0,2,1_{n-2},1_{2},x] < 3.
\end{array}
\]
Combining these results, we get $\mu_n(A) < 3$ for all $n$.
\end{proof}

 A sequence $\{r(i)\}$  of nonnegative integers is said to be \textsl{Markoff balanced} (see Cusick--Flahive [5, p.~28]) if it satisfies the conditions (A) and (B) of Theorem~\ref{flthe4.1}. Let ${\mathcal M}_{01}$ be the family of all Markoff balanced sequences $\{r(i)\}$ which  satisfy the condition (C$_{01}$) of Theorem~\ref{flthe4.1}, and suppose that ${\mathcal M}_{10}$ is the family of all Markoff balanced sequences $\{t(i)\}$ which  satisfy the condition
 \begin{description}
  \item[\mbox{$(\mathrm{C}_{10})$} \ ] $t(1) \geq t(2)$. Moreover, if $t(i+1) - t(i) = +1$, then there exists
  $1 \leq j< i$ such that $t(i+j+1)< t(i-j)$.
\end{description}
By analogy to the Markoff result (see Theorem~4 in Cusick--Flahive [5, p.~7]) in Theorem~\ref{flthe4.2} and Corollary~\ref{corollary4.1} we characterize all Markoff balanced sequences of the family ${\mathcal M}_{01}$ and ${\mathcal M}_{10}$, respectively.

We say that two sequences $\{r(i)\}$ and $\{r'(i)\}$ are \textsl{equivalent} if and only if there exist integers $k$ and $l$ such that
\[
r(k+i) = r'(l+i), \quad \hbox{for } i \in N.
\]
\begin{thm}\label{flthe4.2}
Suppose a sequence $\{r(i)\}\in {\mathcal M}_{01}$, $\{r(i)\} \neq \overline{0}$. Then there is a positive integer $r$ such that the sequence $\{r(i)\}$ is of one of the types
\begin{description}
 \item[$R_{0}$] $\qquad  \overline{r}$,
\item[$R_{01}$] $ \qquad  r-1,\overline{r}$,
\item[$R$ ]  \qquad $ (r-1)_{s(1)}, r, (r-1)_{s(2)}, r,(r-1)_{s(3)},r, \ldots$,
\end{description}
where the sequence $\{s(i)\} \in {\mathcal M}_{10}$, and $s(i) \neq 0$ for infinitely many $i$.
\end{thm}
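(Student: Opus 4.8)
The plan is to argue by a direct structural analysis of the sequence $\{r(i)\}$, splitting into cases according to whether the gaps $r(i+1)-r(i)$ are ever equal to $+1$, and if so how the $+1$'s and $-1$'s are distributed. Since $\{r(i)\}$ is Markoff balanced, condition (A) forces every difference $r(i+1)-r(i)$ to lie in $\{-1,0,+1\}$; in particular, between any two indices where the value changes, the sequence is constant, and whenever it changes by $+1$ it does so by jumping from some value $v$ to $v+1$, and likewise a $-1$ step jumps from $v+1$ to $v$. Let $r-1$ be the smallest value taken by $\{r(i)\}$ (a finite minimum exists since the $r(i)$ are nonnegative integers and, by (A), cannot drift to $-\infty$; note $r\geq 1$ because $\{r(i)\}\neq\overline{0}$ and $0$ is attained by the minimum-or-minimum-plus-one dichotomy we prove next).

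First I would show the sequence only ever takes the two values $r-1$ and $r$. Suppose not; pick an index where some difference equals $+1$ going \emph{from} $v$ \emph{to} $v+1$ with $v\geq r$, i.e. from a value already above the minimum. Using condition (B) together with $(\mathrm C_{01})$, I would trace backwards: (B) says that after a $+1$ step the first nonzero ``reflected difference'' $r(i+j+1)-r(i-j)$ is negative, which, combined with the balancedness and the fact that the minimum value $r-1$ is attained, produces a contradiction with the asymmetry demanded by $(\mathrm C_{01})$ (which is exactly a one-sided version of (B) anchored at the start of the sequence). The cleanest way to package this is: if the value $v+1>r$ ever occurs, then running (B)/$(\mathrm C_{01})$ shows that a value $<r-1$ would have to occur earlier, contradicting minimality. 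Hence $\{r(i)\}\subseteq\{r-1,r\}$.

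Now rewrite the sequence in terms of the run-lengths of $r-1$ between consecutive occurrences of $r$. If $r$ never occurs, we are in type $R_0$, $\{r(i)\}=\overline{r}$ is impossible (that would be all $r$'s, not all $r-1$'s) — so actually the constant case is $\overline{r}$ when $r-1$ never occurs and I would relabel so that the constant value is called $r$; if instead a change does occur then $r$ occurs infinitely often (once it changes, (B) propagates changes in both directions), so the sequence is genuinely $(r-1)_{s(1)},r,(r-1)_{s(2)},r,\ldots$ with each $s(i)\geq 0$ and $s(i)\neq 0$ for infinitely many $i$ (otherwise it is eventually $\overline{r}$, contradicting that it is not eventually constant once it has changed, or else it is $r-1,\overline{r}$, i.e.\ type $R_{01}$, the borderline case where exactly one initial block of $r-1$'s of length $s(1)=1$ is followed by all $r$'s — this is precisely type $R_{01}$). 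Then I would verify that conditions (A), (B), $(\mathrm C_{01})$ for $\{r(i)\}$ translate \emph{exactly} into conditions (A), (B), $(\mathrm C_{10})$ for $\{s(i)\}$: a difference $s(i+1)-s(i)=+1$ corresponds to the block of $r-1$'s lengthening, which at the level of $\{r(i)\}$ is a $-1$ step entering that block, and the reflected-difference bookkeeping in (B) for the $r$-sequence becomes the reflected-difference bookkeeping in (B)/$(\mathrm C_{10})$ for the $s$-sequence with the roles of $+1$ and $-1$ swapped; balancedness is preserved since run-length sequences of balanced $\{-1,0,1\}$-sequences are again balanced.

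The main obstacle I expect is the bookkeeping in the previous paragraph: showing that the one-sided start condition $(\mathrm C_{01})$ on $\{r(i)\}$ is equivalent, under the run-length transform, to $s(1)\geq s(2)$ plus the one-sided version of (B) that is $(\mathrm C_{10})$, and that no information is lost or spuriously gained at the left end. This requires carefully matching the index $j$ in the reflected differences of $\{r(i)\}$ with the corresponding index in $\{s(i)\}$, and checking the edge behaviour when $s(1)=0$ or $s(1)=1$ (the latter being exactly what separates type $R_{01}$ from type $R$). Everything else — the trichotomy of differences, the two-values lemma, the emergence of the three types — follows by the short case analysis above once that dictionary between $\{r(i)\}$ and $\{s(i)\}$ is in place.
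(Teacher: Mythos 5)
Your outline coincides with the paper's own strategy: show that $\{r(i)\}$ takes at most two values $r-1$ and $r$, dispose of the sequences equivalent to a constant, then run-length encode the blocks of $r-1$'s and transfer (A), (B), $(\mathrm{C}_{01})$ into (A), (B), $(\mathrm{C}_{10})$ for $\{s(i)\}$. However, each of the three load-bearing steps has a genuine gap. First, the two-values statement does not follow from the one-shot reflection you describe (``a value $<r-1$ would have to occur earlier''): the paper proves $|r(i)-r(j)|\leq 1$ by taking a minimal window length $k$ with $|r(i)-r(i+k+1)|=2$ and using (A) and (B) to show that the pattern $n,(n+1)_k,n+2$ forces $(n+1)_{k-1},n$ to follow it, producing a shorter such window --- a contradiction with minimality, not with the attained minimum. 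A single application of (B) at the offending $+1$ step does not manufacture a value below the minimum. Second, your claim that ``once it changes, (B) propagates changes in both directions'' is false: the sequence $(r-1)_m,r,\overline{r-1}$ satisfies both (A) and (B) (every reflected difference about each of its two steps vanishes, so (B) is vacuous), and it is precisely $(\mathrm{C}_{01})$ that excludes it and its mirror $r_n,r-1,\overline{r}$. Your mechanism would let these sequences through, and they are exactly the cases that separate types $R_0$ and $R_{01}$ from spurious eventually-constant sequences.

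Third, the dictionary between $\{r(i)\}$ and $\{s(i)\}$ --- which you yourself flag as ``the main obstacle'' --- is the actual content of the theorem and is not carried out. In particular, $(\mathrm{C}_{10})$ demands not merely that the first nonzero reflected difference $s(i+j+1)-s(i-j)$ after a $+1$ step be negative, but that such a negative difference actually occurs; this existence clause does not come from ``swapping the roles of $+1$ and $-1$'' in (B), and the paper needs a separate argument for it (pairing off terms equidistant from the central pair and invoking $(\mathrm{C}_{01})$ to force a term $r$ on the right matched with a term $r-1$ on the left). Likewise, ``run-length sequences of balanced sequences are balanced'' is not a citable fact; the bound $|s(i+1)-s(i)|\leq 1$ must be extracted from (B) directly, as the paper does by locating the first $+1$ among the reflected differences when $s(i)>s(i+1)+1$. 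As written, the proposal is a correct road map whose decisive steps are either unproved or rest on an incorrect mechanism.
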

\begin{proof}
Let $\{r(i)\} \neq \overline{0}$ be a sequence in ${\mathcal M}_{01}$. We begin by proving the following universal inequality
\\[6pt]
(i) \quad
$|r(i)-r(j)| \leq 1$ \quad for all $i$ and $j$.
\\[6pt]
Suppose, on the contrary, that $|r(i)-r(j)| \geq 2$ for some $i < j$. Hence, by (A) and (B), there is a minimal positive integer $k$ such that $|r(i)- r(i+k+1)| = 2$, for some $i$. We assume that $r(i)= n$ and $r(i+k+1) = n+2$ (a similar argument takes care of the $r(i)= n+2$ and $r(i+k+1) = n$ case). If $k = 1$, then the pattern
\[
r(i)= n,\ r(i+1)= n+1,\ r(i+2)= n+2
\]
occurs in $\{r(i)\}$. Then we would have $r(i+3) \geq n+1$ by (A), which contradicts $r(i+3) \leq n$ by (B). Hence $k \geq 2$, and the pattern
\[
{r(i)} = n,\ {r(i+1)} = \ldots = {r(i+k)} = n+1,\ {r(i+k+1)} = n+2
\]
must appear in $\{r(i)\}$. Then, by (A) and (B),
$$
{r(i+k+2)} = \ldots = {r(i+2k)} = n+1,\
{r(i+2k+1)} = n,
$$
which contradicts the minimality of $k$. So we see that (i) holds.

If $\{r(i)\}$ is equivalent to a constant sequence, then it is of the type  $\overline{r}$ or  $r-1,\overline{r}$, for some positive integer $r$. Otherwise, by (i) and (B), $\{r(i)\}$ is of the type $(r-1)_m, r, \overline{r-1}$ with $m \geq 0$, or $r_n, r-1, \overline{r}$ with $n \geq 1$, which is false by (C$_{01}$).

If $\{r(i)\}$ is not equivalent to a constant sequence, then, by (i), there is a positive integer $r$ such that $\{r(i)\}$ is of the type
$$ (r-1)_{s(1)}, r, (r-1)_{s(2)}, r,(r-1)_{s(3)},r, \ldots ,$$
where $\{s(i)\}$ is a sequence of nonnegative integers and it is not equivalent to $\overline{0}$. We proceed to show that the sequence $\{s(i)\} \in {\mathcal M}_{10}$. Indeed, let $r(k)$ denote the $r$ immediately following $(r-1)_{s(i)}$. If $s(i) >  s(i+1)+ 1$, then $r(k) - r(k-1) = 1$ and $r(k+j) - r(k-j-1) = 0$, for $j = 1, 2, \ldots, s(i+1)$, and is equal to +1 for $j = s(i+1) +1$. This contradicts (B). Therefore, $s(i) \leq s(i+1)+1$, and a~similar argument shows $s(i+1) \leq s(i)+1$. Hence, we have proved that
\\[6pt]
(ii) \quad $|s(i+1)-s(i)| \leq 1$ \quad for all $i$.
\\[6pt]
The inequality $s(1)~ \geq~ s(2)$ follows from (C$_{01}$). Let $s(i+1)-s(i) = 1$. By (B) and (C$_{01}$), after pairing off the equal terms $r-1$ or $r$ equidistant from the central pair $r(k)$, $r(k+1)$, we must come to a term $r$ on the right and an equidistant term $r-1$ on the left. Thus the latter term is one of a block of consecutive terms $r-1$ which is longer then the corresponding block of consecutive terms $r-1$ on the right. Hence, the first nonzero difference $s(i+j+i) - s(i-j)$ is negative. So we see that
\\[6pt]
(iii) \quad  $s(1) \geq s(2)$. Moreover,
if $s(i+1)-s(i) = + 1$, then the first of the integers $s(i+j+1)-s(i-j)$ ($ 1 \leq j < i$) which is not zero is negative, and such negative integer does actually occur.
\\[6pt]
In a similar way we can see that
\\[6pt]
(iv) \quad
if $s(i+1)-s(i) = - 1$, then the first of the integers $s(i+j+1)-s(i-j)$ ($ 1 \leq j < i$) which is not zero is positive.
\end{proof}
Likewise, we can prove the following corollary.
\begin{cor}\label{corollary4.1}
Suppose a sequence $\{t(i)\} \in {\mathcal M}_{10}$.
Then there is a nonnegative integer $t$ such that the sequence $\{t(i)\}$ is of one of the types
\begin{description}
\item[$T_{0}$] $ \qquad  \overline{t}$,
\item[$T_{10}$] $\qquad   t+1,\overline{t}$,
\item[$T$ ]  $\qquad   (t+1)_{u(1)}, t, (t+1)_{u(2)}, t, (t+1)_{u(3)}, t, \ldots,$
\end{description}
where the sequence  $\{u(i)\} \in {\mathcal M}_{01}$, and $u(i) \neq 0$ for infinitely many $i$.
\end{cor}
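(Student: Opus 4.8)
The plan is to prove Corollary~\ref{corollary4.1} as the mirror image of Theorem~\ref{flthe4.2}, simply interchanging throughout the roles of $\le$ and $\ge$, of the differences $+1$ and $-1$, and of the relations $>$ and $<$; this interchanges the conditions $(\mathrm C_{01})$ and $(\mathrm C_{10})$ and the families $\mathcal M_{01}$ and $\mathcal M_{10}$. (Equivalently, one may compose $\{t(i)\}$ with the value reflection $x\mapsto c-x$ for a suitable constant~$c$: this involution carries $\mathcal M_{10}$ onto $\mathcal M_{01}$ and the type~$T$ block pattern onto the type~$R$ block pattern, which reduces the corollary to Theorem~\ref{flthe4.2}.)

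First I would establish the mirror of inequality~(i) in the proof of Theorem~\ref{flthe4.2}, namely $|t(i)-t(j)|\le 1$ for all $i,j$. That argument uses only conditions (A) and (B), which are unaffected by the mirroring, so it is verbatim the same: assuming a pair with $|t(i)-t(i+k+1)|=2$ and $k$ minimal, the cases $k=1$ and $k\ge 2$ are ruled out exactly as before, propagating the forced pattern by (A) and (B) until the minimality of $k$, or (A), is contradicted. Next, if $\{t(i)\}$ is equivalent to a constant sequence, then by~(i) and (B) it is $\overline t$, or $t+1,\overline t$, or $(t+1)_m,t,\overline{t+1}$, or $t_n,t+1,\overline t$; the last two are excluded by $(\mathrm C_{10})$, the unique ascent they contain having no admissible witness $1\le j<i$, so only the types $T_0$ and $T_{10}$ remain, for some nonnegative integer~$t$. (Unlike in Theorem~\ref{flthe4.2} nothing has to be excluded, since $\overline 0$ is itself of type $T_0$.)

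If $\{t(i)\}$ is not equivalent to a constant, then by~(i) it takes exactly two consecutive values $t,t+1$ with $t\ge 0$; by (B) the value $t$ never occurs in a block of length $\ge 2$, and by $(\mathrm C_{10})$ it does not occur in an initial block either, so $\{t(i)\}$ has the type~$T$ form $(t+1)_{u(1)},t,(t+1)_{u(2)},t,\ldots$ with $u(i)\ge 0$, and $u(i)\ne 0$ for infinitely many $i$ (else $\{t(i)\}$ would be equivalent to a constant). It then remains to show $\{u(i)\}\in\mathcal M_{01}$, which is the mirror of steps (ii)--(iv): inspecting the singleton $t(k)=t$ lying between the blocks $(t+1)_{u(i)}$ and $(t+1)_{u(i+1)}$ and applying (B) to $\{t(i)\}$ yields $|u(i+1)-u(i)|\le 1$ together with the "first nonzero difference'' statements for $\{u(i)\}$, and a pairing argument around that singleton, fed by $(\mathrm C_{10})$ on $\{t(i)\}$, supplies the remaining inequality $u(1)\le u(2)$ and the clause that the required difference actually occurs; combining these gives (A), (B) and $(\mathrm C_{01})$ for $\{u(i)\}$.

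The step I expect to be the real work is this last one: making the mirrored pairing argument precise, i.e.\ checking that pairing the terms equidistant from the central pair $t(k),t(k+1)$ and tracking the resulting block lengths genuinely converts the conditions on $\{t(i)\}$ into the claimed conditions on $\{u(i)\}$ --- in particular that the range restriction $1\le j<i$ and the existence clause survive the translation. Everything else is a routine transcription of the proof of Theorem~\ref{flthe4.2}.
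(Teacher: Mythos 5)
Your overall strategy --- transcribing the proof of Theorem~\ref{flthe4.2} with high and low values, ascents and descents, interchanged --- is exactly what the paper intends (its entire ``proof'' of the corollary is the sentence ``Likewise, we can prove the following corollary''), and the parts you call routine are indeed routine. But the step you single out as ``the real work'' is precisely where the argument does not deliver the stated conclusion: carried out carefully, the mirrored pairing argument places $\{u(i)\}$ in ${\mathcal M}_{10}$, not in ${\mathcal M}_{01}$. At the first separator $t(k_1)=t$, $k_1=u(1)+1$, the ascent $t(k_1+1)-t(k_1)=+1$ together with $(\mathrm{C}_{10})$ forces some $j\le u(1)$ with $t(k_1+j+1)=t$, and the smallest such $j$ is $u(2)$; this gives $u(2)\le u(1)$, the opening clause of $(\mathrm{C}_{10})$, not the inequality $u(1)\le u(2)$ you claim. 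Likewise, the existence clause of $(\mathrm{C}_{10})$ on $\{t(i)\}$ attaches only to ascents of $\{t(i)\}$, which sit at the left end of each block of $(t+1)$'s and translate into the ascent clause of $(\mathrm{C}_{10})$ for $\{u(i)\}$; descents of $\{u(i)\}$ inherit only the (possibly vacuous) sign condition of (B). Your own reflection reduction exposes the same mismatch: $x\mapsto(2t+1)-x$ carries a non-constant $\{t(i)\}\in{\mathcal M}_{10}$ of type $T$ to a type-$R$ sequence of ${\mathcal M}_{01}$ whose block-length sequence is the \emph{same} $\{u(i)\}$, and Theorem~\ref{flthe4.2} then puts that sequence in ${\mathcal M}_{10}$.

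This is not a repairable slip in your write-up, because the conclusion $\{u(i)\}\in{\mathcal M}_{01}$ is false as stated. Take $\{t(i)\}=\overline{1,0,1,0,0}$. One checks directly that it satisfies (A), (B) and $(\mathrm{C}_{10})$, so $\{t(i)\}\in{\mathcal M}_{10}$; it is of type $T$ with $t=0$ and $\{u(i)\}=\overline{1,1,0}$. But $\overline{1,1,0}$ violates $(\mathrm{C}_{01})$: $u(3)-u(2)=-1$, while the only admissible index $j=1$ gives $u(4)=1\not>1=u(1)$. (It does lie in ${\mathcal M}_{10}$.) So the final clause of the corollary should read $\{u(i)\}\in{\mathcal M}_{10}$; with that correction your mirrored argument goes through, and the induction in Theorem~\ref{flthe4.3} is unaffected, since it only needs the period of $\{u(i)\}$ to be smaller than that of $\{t(i)\}$. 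A separate small error in your sketch: it is not true that the value $t$ ``never occurs in a block of length $\ge 2$'' --- consecutive separators are exactly what the empty blocks $u(i)=0$ encode, as the example above shows, so the type-$T$ form must be argued without that claim.
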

\begin{thm}\label{flthe4.3}
For every periodic (not necessarily purely periodic) sequence $\{r(i)\}$ in ${\mathcal M}_{01}$, $\{r(i)\} \neq \overline{0}$, there exists exactly one more sequence in ${\mathcal M}_{01}$ which is equivalent to  $\{r(i)\}$. For every periodic sequence $\{t(i)\}$ in ${\mathcal M}_{10}$ there exists exactly one more sequence in ${\mathcal M}_{10}$, which is equivalent to  $\{t(i)\}$.
\end{thm}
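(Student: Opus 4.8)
The plan is to prove the $\mathcal M_{01}$-statement and the $\mathcal M_{10}$-statement simultaneously, by induction on the minimal period $p$ of the sequence in question; throughout, $\sim$ denotes equivalence of sequences.

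The base case is $p=1$, i.e.\ eventually constant sequences. By Theorem~\ref{flthe4.2} a periodic sequence in $\mathcal M_{01}$ that is eventually constant is of type $R_0$ or $R_{01}$, so it is $\overline r$ or $r-1,\overline r$ for one and the same integer $r\ge 1$; these two are equivalent to one another, and the classification in Theorem~\ref{flthe4.2} together with the ``$\ldots$ is false by (C$_{01}$)'' remarks in its proof shows that no further sequence in $\mathcal M_{01}$ is eventually $\overline r$. Hence the class has exactly two members. The same argument via Corollary~\ref{corollary4.1} settles the $\mathcal M_{10}$-case, where now $t\ge 0$ is allowed and the class of $\overline 0$ consists of $\overline 0$ and $1,\overline 0$.

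For the inductive step let $\{r(i)\}\in\mathcal M_{01}$ be periodic of minimal period $p\ge 2$; then it is not eventually constant, so by Theorem~\ref{flthe4.2} it has type $R$,
\[
\{r(i)\}=(r-1)_{s(1)},\,r,\,(r-1)_{s(2)},\,r,\ldots,\qquad \{s(i)\}\in\mathcal M_{10},
\]
with $s(i)\ne 0$ for infinitely many $i$; then $\{s(i)\}$ is again periodic, of minimal period $q<p$ (one period of $\{r(i)\}$ contains $k$ occurrences of $r$ with $1\le k\le p-1$, since both values occur, and passing to blocks shows $q\mid k$). Consider the \emph{block map} sending any sequence $\{\rho(i)\}$ with values in $\{r-1,r\}$ in which $r$ occurs infinitely often to the sequence $\{\sigma(i)\}$ of successive numbers of $(r-1)$'s ($\sigma(1)$ of them before the first $r$, then $\sigma(2)$ between the first and the second $r$, and so on). I would show it restricts to a bijection
\[
\{\,\{\rho(i)\}\in\mathcal M_{01}:\ \{\rho(i)\}\sim\{r(i)\}\,\}\ \longleftrightarrow\ \{\,\{\sigma(i)\}\in\mathcal M_{10}:\ \{\sigma(i)\}\sim\{s(i)\}\,\}\,;
\]
granting this, the right-hand side has exactly two members by the inductive hypothesis applied to $\{s(i)\}$ (which has period $q<p$), hence so does the left-hand side, which is the assertion. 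The case $\{t(i)\}\in\mathcal M_{10}$ of type $T$ is handled mirror-image, passing to the associated $\{u(i)\}\in\mathcal M_{01}$ (here $\{u(i)\}\ne\overline 0$, so the $\mathcal M_{01}$-hypothesis applies).

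To establish the bijection: any $\{\rho(i)\}\in\mathcal M_{01}$ with $\{\rho(i)\}\sim\{r(i)\}$ eventually agrees with a shift of $\{r(i)\}$, hence takes both values $r-1,r$ infinitely often (in particular $\{\rho(i)\}\ne\overline 0$), and therefore, by property (i) in the proof of Theorem~\ref{flthe4.2}, has all its values in $\{r-1,r\}$; so the block map is defined on it, carries it to a sequence $\{\sigma(i)\}\sim\{s(i)\}$, and (by the computation deriving (ii), (iii), (iv) in the proof of Theorem~\ref{flthe4.2}) into $\mathcal M_{10}$. The map is injective because $\{\rho(i)\}$ is recovered from $\{\sigma(i)\}$ with $r$ fixed. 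Surjectivity is the converse: if $\{\sigma(i)\}\in\mathcal M_{10}$ with $\{\sigma(i)\}\sim\{s(i)\}$ --- so $\{\sigma(i)\}\ne\overline 0$ and hence $\sigma(1)\ge 1$ --- then $(r-1)_{\sigma(1)},r,(r-1)_{\sigma(2)},r,\ldots$ lies in $\mathcal M_{01}$: here (A) is automatic (consecutive values differ by at most $1$) and (B), (C$_{01}$) follow by running the Lemma~\ref{fllem3.1}/Lemma~\ref{fllem4.1} arguments of the proof of Theorem~\ref{flthe4.2} in reverse. I expect this reverse implication to be the main obstacle, and within it the precise matching of the start-of-sequence conditions --- that $\sigma(1)\ge\sigma(2)$ together with the ``moreover'' clause of (C$_{10}$) translates exactly into the ``moreover'' clause of (C$_{01}$) for the block-coded sequence, the degenerate case $\sigma(1)=0$ (which would break the matching) being excluded precisely because it forces $\{\sigma(i)\}=\overline 0$. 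The translations of (A) and (B) are routine, one direction of the latter being already contained in the proof of Theorem~\ref{flthe4.2}.
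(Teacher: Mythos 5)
Your proposal is correct and follows essentially the same route as the paper: induction on the period, with the base case handled by types $R_0$/$R_{01}$ (resp.\ $T_0$/$T_{10}$) and the inductive step passing to the shorter-period sequence $\{s(i)\}\in\mathcal M_{10}$ (resp.\ $\{u(i)\}\in\mathcal M_{01}$) furnished by Theorem~\ref{flthe4.2} and Corollary~\ref{corollary4.1}. You are in fact more explicit than the paper about the key point --- that the block coding induces a bijection between the two equivalence classes, including the surjectivity direction --- which the paper's proof asserts only implicitly via ``it is determined by a periodic sequence $\{s(i)\}$.''
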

\begin{proof}
The proof proceeds by induction on the period of a sequence. If $\{r(i)\}$ ($\{t(i)\}$) has a period equal to $1$, then by Theorem~\ref{flthe4.2} (Corollary~\ref{corollary4.1}), it is of type {$R_{0}$} or {$R_{01}$}  ({$T_{0}$} or {$T_{10}$}, respectively) and the theorem holds. If $\{r(i)\}$ ($\{t(i)\}$) is of type {R} ({T}), then, by Theorem~\ref{flthe4.2} (Corollary~\ref{corollary4.1}), it is determined by a periodic sequence $\{s(i)\} \in {\mathcal M}_{10}$ ($\{u(i)\} \in {\mathcal M}_{01})$ with a period smaller than that of $\{r(i)\}$ ($\{t(i)\}$, respectively). Since there exists just one more sequence in ${\mathcal M}_{10}$ (${\mathcal M}_{01}$) which is equivalent to  $\{s(i)\}$ ($\{u(i)\}$), we have just one more sequence in ${\mathcal M}_{01}$ (${\mathcal M}_{10}$) which is equivalent to  $\{r(i)\}$ ($\{t(i)\}$, respectively).
\end{proof}
Let $\theta = [0,a_1,a_2, \ldots]$ and  $A= a_1, a_2 , \ldots $. By the Legendre Theorem (L)
and the Perron Theorem (P) we obtain the following formula
\begin{equation}\label{fleq4.2}
1/\varphi(\theta) = \sup \{ \mu _n (A): n \in N \}.
\end{equation}
 \begin{thm}\label{flthe4.4}
 If $\varphi(\theta) >\frac{1}{3}$, then $\theta$ is $\pm$~equivalent to a root of \hbox{$f_m(x,1)=0$}, where $f_m$ is a~Markoff form.
 \end{thm}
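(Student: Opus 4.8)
The plan is to start from the hypothesis $\varphi(\theta) > 1/3$ and use formula (\ref{fleq4.2}): writing $\theta = [0,a_1,a_2,\ldots]$ (after replacing $\theta$ by a $\pm$-equivalent number we may assume $0 < \theta < 1$, so $a_0 = 0$) and $A = a_1,a_2,\ldots$, the assumption is exactly that $\mu_n(A) < 3$ for all $n \in N$. The first step is to argue that all partial quotients $a_i$ are $1$ or $2$: if some $a_i \geq 3$ then $\mu_{i}(A)$ or $\mu_{i-1}(A)$ is already $\geq 3$. A slightly more delicate point is to handle $a_1$: since $\mu_1(A) = [0,a_1] + [a_2,a_3,\ldots] < 3$ and the second term is $> 1$, we need $a_1 \ge 2$; combined with $a_1 \le 2$ this forces $a_1 = 2$. (The degenerate case $A = 2,\overline 1$ corresponds, via the Lagrange identity, to a number $\pm$-equivalent to $\alpha_1 = [0,\overline 1]$, a root of $f_1$, so it is handled directly; and $A = \overline 1$ is likewise $\alpha_1$.) So, after discarding these cases, $A$ has $a_1 = 2$, all partial quotients in $\{1,2\}$, and $\mu_n(A) < 3$ for all $n$, which is precisely the hypothesis of Theorem~\ref{flthe4.1}.

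The second step is to apply Theorem~\ref{flthe4.1}: $A$ has the form $2,1_{2r(1)},2,2,1_{2r(2)},2,2,\ldots$ where $\{r(i)\}$ is a sequence of nonnegative integers satisfying (A), (B) and (C$_{01}$), i.e.\ $\{r(i)\} \in {\mathcal M}_{01}$. I must then show the tail of this sequence is periodic and identify the period with the data of a Markoff form. The key structural input is Theorem~\ref{flthe4.2} (and Corollary~\ref{corollary4.1}): a sequence in ${\mathcal M}_{01}$ is of type $R_0 = \overline r$, type $R_{01} = r-1,\overline r$, or type $R$, in which case it is built from an ${\mathcal M}_{10}$-sequence $\{s(i)\}$ with strictly smaller complexity, and dually for ${\mathcal M}_{10}$. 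Iterating this descent (the same descent used in the proof of Theorem~\ref{flthe4.3}), a sequence $\{r(i)\} \in {\mathcal M}_{01}$ that does not terminate in a constant tail generates, after finitely many steps, a periodic pattern; more precisely the descent is exactly the inverse of the Frobenius/Christoffel construction of $S(\mu,\nu)$ from a pair $(\mu,\nu)$, so the eventual period of $\{r(i)\}$ is $\{r(i)\}$ associated (via $r(i) = [i\mu/\nu] - [(i-1)\mu/\nu]$) to some coprime pair $(\mu,\nu)$, hence to some Markoff number $m > 2$ via the Frobenius coordinates. This is where most of the work lies, and it is the step I expect to be the main obstacle: one has to show that membership in ${\mathcal M}_{01}$ together with (eventual) periodicity forces the period to be one of the admissible Christoffel/Markoff patterns $S(\mu,\nu)$, rather than merely a balanced periodic sequence, and to track that the "one more equivalent sequence" in Theorem~\ref{flthe4.3} corresponds to the two roots $\alpha_m, \beta_m$ of the same $f_m$.

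The third step is the bookkeeping that turns a periodic ${\mathcal M}_{01}$-tail into a continued fraction of a root of $f_m(x,1)=0$. If $\{r(i)\}$ is eventually periodic with period equal to the $S(\mu,\nu)$-pattern for the Markoff number $m$, then by Serret's theorem (S) the number $\theta$ is \emph{equivalent} to $[0,\overline{2,S(\mu,\nu),1,1,2}] = \alpha_m$ or to $-\beta_m - 2 = [0,\overline{1,1,S(\mu,\nu),2,2}]$ from Lemma~\ref{fllem2.1}; but here I want $\pm$-equivalence, which is stronger. To upgrade, note that because $a_1 = 2$ and the whole sequence $A$ is itself of the form prescribed by Theorem~\ref{flthe4.1} with the \emph{same} periodic pattern, the non-periodic prefix of $A$ is forced (by (C$_{01}$), exactly as in the contradiction arguments inside the proof of Theorem~\ref{flthe4.1}) to already be a prefix of the purely periodic expansion, so $A$ itself equals $\overline{2,S(\mu,\nu),1,1,2}$ up to a shift that is a multiple of the period; equivalently $\theta$ is $\pm$-equivalent to $\alpha_m$ (if the shift lands it on the $\alpha_m$ pattern) or to $\beta_m$ (noting $-\beta_m-2$ and $\beta_m$ are $\pm$-equivalent, and $\beta_m+3$ from Lemma~\ref{fllem2.1} gives the other normalization). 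Finally, the excluded cases of Theorem~\ref{flthe4.2}: type $\overline 0$ gives $A = \overline 2$, i.e.\ $\theta \sim \alpha_2$, a root of $f_2$; type $R_0 = \overline r$ and type $R_{01} = r-1,\overline r$ with $r \geq 1$ give $S(\mu,1) = 1_{2\mu-2}$, i.e.\ the Frobenius coordinate $\nu = 1$, again a genuine Markoff form. Collecting all cases, $\theta$ is $\pm$-equivalent to a root of $f_m(x,1)=0$ for some Markoff form $f_m$, which is the assertion.
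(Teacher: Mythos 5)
Your overall skeleton (normalize into $(0,\frac{1}{2})$, apply Theorem~\ref{flthe4.1} to get $\{r(i)\}\in{\mathcal M}_{01}$, then identify $\theta$ with a root of a Markoff form) matches the paper's, but the central step is missing, and the paper fills it in a way you do not. You propose to obtain periodicity of $\{r(i)\}$ and the identification of its period with a Christoffel pattern $S(\mu,\nu)$ by iterating the descent of Theorem~\ref{flthe4.2}/Corollary~\ref{corollary4.1}. That descent does not terminate for a general member of ${\mathcal M}_{01}$: conditions (A), (B), (C$_{01}$) are also satisfied by aperiodic (Sturmian-type) balanced sequences, for which every $\mu_n(A)<3$ while $\sup_n\mu_n(A)=3$; for these the descent produces an infinite chain of type-$R$/type-$T$ sequences and never reaches a constant tail, so ``membership in ${\mathcal M}_{01}$'' alone cannot give periodicity, and showing that the strict inequality $\varphi(\theta)>1/3$ excludes the aperiodic case by combinatorics of the descent would amount to reproving the hard half of Markoff's theorem. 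The paper avoids this entirely: since $v(\theta)\ge\varphi(\theta)>1/3$, the classical theorem (M1) already says $\theta$ is equivalent to $\alpha_m$ for some Markoff number $m$; that is what makes $\{r(i)\}$ periodic and ties the equivalence class to a specific Markoff form, with no new classification needed. This is exactly the step you flagged as ``the main obstacle,'' and it is a genuine gap, not an omitted computation.

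Two smaller points. First, your upgrade from equivalence to $\pm$~equivalence (``the non-periodic prefix is forced to be a prefix of the purely periodic expansion'') is asserted, not proved; the paper's mechanism is a counting argument: Theorem~\ref{flthe4.3} says a periodic equivalence class in ${\mathcal M}_{01}$ contains exactly two sequences, hence by (\ref{fleq4.2}) and Theorem~\ref{flthe4.1} there are exactly two numbers in $(0,\frac{1}{2})$ with $\varphi>1/3$ in the given equivalence class, and Theorem~\ref{flthe3.1} together with Lemma~\ref{fllem2.1} exhibits $\alpha_m$ and $\beta_m+3$ as two such (distinct, for $m>2$) numbers, so $\theta$ must be one of them. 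Your step~3 should be routed through that count rather than through a prefix analysis. Second, your derivation of $a_1\ge 2$ from $\mu_1(A)<3$ fails: $a_1=1$ gives $\mu_1(A)=1+[a_2,a_3,\ldots]\in(2,3)$, which violates nothing; the correct move is simply to choose the $\pm$~equivalent representative of $\theta$ in $(0,\frac{1}{2})$, which forces $a_1\ge 2$.
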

\begin{proof}
Let $\alpha_m$ and $\beta_m$ be the positive and the negative root of the quadratic equation $f_m(x,1)=0$, where $f_ m$ is a Markoff form.  By Theorem~\ref{flthe3.1} and Lemma~\ref{fllem2.1}, we have
\\[6pt]
(i) \quad
$\varphi(\alpha_m) >\frac{1}{3}$ and $\varphi(\beta_m + 3) >\frac{1}{3}$,
\\[6pt]
(ii)\quad $0 <\alpha_m < \beta_m  + 3 < \frac{1}{2}$, for $m > 2$.
\\[6pt]
Let $\varphi(\theta) > \frac{1}{3}$. Since $\nu(\theta) \geq \varphi(\theta) > \frac{1}{3}$, by the Markoff theorem  (M1) we obtain
 \\[6pt]
(iii) \quad $\theta$, $\alpha_m$ and $\beta_m  + 3$ are equivalent, for some Markoff number $m$.
\\[6pt]
Since every real number is $\pm$~equivalent to an element of the interval $(0,\frac{1}{2})$, we can assume that $\theta =[0,a_1,a_2, \ldots]$ and $a_1 \geq 2$. Suppose that $\theta \neq [0,2,\overline{1}]$ and $\theta \neq [0,\overline{2}]$. By condition (\ref{fleq4.2}) and Theorem~\ref{flthe4.1}, $\{a_i\}$ is of the form
\[
2,1_{2r(1)},2,2,1_{2r(2)},2,2,1_{2r(3)},2,2, \ldots
\]
where $\{r(i)\} \in {\mathcal M}_{01}$. Since $\theta \neq [0,\overline{2}]$, Theorem \ref{flthe4.2} shows that $\{r(i)\}$ is not equivalent to $\overline{0}$. Hence, we have
 \\[6pt]
(iv) $\theta$ is equivalent neither to $\alpha_1 = [0,\overline{1}]$ nor to $\alpha_2 = [0,\overline{2}]$.
 \\[6pt]
By (iii), the sequence $\{r(i)\}$ is periodic. Hence, by Theorem~\ref{flthe4.3} there exists just one more sequence in ${\mathcal M}_{01}$ which is equivalent to  $\{r(i)\}$. Accordingly, by condition (\ref{fleq4.2}) and Theorem~\ref{flthe4.1}
there exists just one more real number $\theta' \in (0,\frac{1}{2})$ with $\varphi(\theta') >\frac{1}{3}$, which is equivalent to  $\theta$.
Hence, from (i)--(iv) we conclude that $\theta = \alpha_m$ or $\theta = \beta_m  + 3$, for some $m > 2$.
\end{proof}

\end{document}